\newtheorem{thm}{Theorem}[section]
\newtheorem{prop}[thm]{Proposition}
\newtheorem{lem}[thm]{Lemma}
\newtheorem{conj}[thm]{Conjecture} 
\theoremstyle{definition}
\newtheorem{definition}[thm]{Definition}
\theoremstyle{remark}
\newtheorem{remark}[thm]{Remark}
\numberwithin{equation}{section}                                                                                                                                                                                                                                                                                                                                                                                                                                                                                                                                                                                                                                                                                                                                                                                                                                                                                                                                                                                                                                                                                                                                                                                                                                                                                                                                                                                                                                                                                                                                                                                                                                                                                                                                                                                                                                                                                                                                                                                                                                                                                                                                                                                                                                                                                                                                                                                                                                                                                                                                                                                                                                                                                                                                                                                                                                                                                                                                                                                                                                                                                                                                                                                                                                                                                                                                                                                                                                                                                                                                                                                                                                                                                                                                                                                                                                                                                                                                                                                                                                                                                                                                                                                                                                                                                                                                                                                                                                                                                                                                                                                                                                                                                                                                                                                                                                                                                                                                                                                                                                                                                                                                                                                                                                                                                                                                                                                                                                                                                                                                                                                                                                                                                                                                                                                                                                                                                                                                                                                                                                                                                                                                                                                                                                                                                                                                                                                                                                                                                                                                                                                                                                                                                                                                                                                                                                                                                                                                                                                                                                                                                                                                                                                                                                                                                                                                                                                                                                                                                                                                                                                                                                                                                                                                                                                                                                                                                                                                                                                                                                                                                                                                                                                                                                                                                                                                                                                                                                                                                                                                                                                                                                                                                                                                                                                                                                                                                                                                                                                                                                                                                                                                                                                                                                                                                                                                                                                                                                                                                                                                                                                                                                                                                                                                                                                                                                                                                                                                                                                                                                                                                                                                                                                                                                                                                                                                                                                                                                                                                                                                                                                                                                                                                                                                                                                                                                                                                                                                                                                                                                                                                                                                                                                                                                                                                                                                                                                                                                                                                                                                                                                                                                                                                                                                                                                                                                                                                                                                                                                                                                                                                                                                                                                                                                                                                                                                                                                                                                                                                                                                                                                                                                                                                                                                                                                                                                                                                                                                                                                                                                                                                                                                                                                                                                                                                                                                                                                                                                                                                                                                                                                                                                                                                                                                                                                                                                                                                                                                                                                                                                                                                                                                                                                                                                                                                                                                                                                                                                                                                                                                                                                                                                                                                                                                                                                                                                                                                                                                                                                                                                                                                                                                                                                                                                                                                                                                                                                                                                                                                                                                                                                                                                                                                                                                                                                                                                                                                                                                                                                                                                                                                                                                                                                                                                                                                                                                                                                                                                                                                                                                                                                                                                                                                                                                                                                                                                                                                                                                                                                                                                                                                                                                                                                                                                                                                                                                                                                                                                                                                                                                                                                                                                                                                                                                                                                                                                                                                                                                                                                                                                                                                                                                                                                                                                                                                                                                                                                                                                                                                                                                                                                                                                                                                                                                                                                                                                                                                                                                                                                                                                                                                                                                                                                                                                                                                                                                                                                                                                                                                                                                                                                                                                                                                                                                                                                                                                                                                                                                                                                                                                                                                                                                                                                                                                                                                                                                                                                                                                                                                                                                                                                                                                                                                                                                                                                                                                                                                                                                                                                                                                                                                                                                                                                                                                                                                                                                                                                                                                                                                                                                                                                                                                                                                                                                                                                                                                                                                                                                                                                                                                                                                                                                                                                                                                                                                                                                                                                                                                                                                                                                                                                                                                                                                                                                                                                                                                                                                                                                                                                                                                                                                                                                                                                                                                                                                                                                                                                                                                                                                                                                                                                                                                                                                                                                                                                                                                                                                                                                                                                                                                                                                                                                                                                                                                                                                                                                                                                                                                                                                                                                                                                                                                                                                                                                                                                                                                                                                                                                                                                                                                                                                                                                                                                                                                                                                                                                                                                                                                                                                                                                                                                                                                                                                                                                                                                                                                                                                                                                                                                                                                                                                                                                                                                                                                                                                                                                                                                                                                                                                                                                                                                                                                                                                                                                                                                                                                                                                                                                                                                                                                                                                                                                                                                                                                                                                                                                                                                   
\newcommand{\R}{\mathbf{R}}
\begin{document}
\title{Cobordism invariants of the moduli space of stable pairs}

\author{Junliang Shen}
\address{Departement Mathematik\\ETH Z\"urich\\R\"amistrasse 101\\8092 Z\"urich\\Switzerland}
\email{junliang.shen@math.ethz.ch}

\subjclass[2010]{14N35, 14J10, 14C35}

\begin{abstract}
For a quasi-projective scheme $M$ which carries a perfect obstruction theory, we construct the virtual cobordism class of $M$ which is the universal virtual fundamental class. If $M$ is projective, we prove that the corresponding Chern numbers  of the virtual cobordism class are given by integrals of the Chern classes of the virtual tangent bundle. Further, we study cobordism invariants of the moduli space of stable pairs introduced by Pandharipande--Thomas. Rationality of the partition function is conjectured together with a functional equation, which can be regarded as a generalization of the rationality and $q^{-1} \leftrightarrow q$ symmetry of the Calabi--Yau case. We prove rationality for nonsingular projective toric 3-folds by the theory of descendents.
\end{abstract}

\maketitle
\setcounter{tocdepth}{1}
\tableofcontents

\section{Introduction}
\subsection{Algebraic cobordism and virtual classes}
Let $M$ be a quasi-projective scheme which carries a perfect obstruction theory. The virtual fundamental class $[M]^{\mathrm{vir}}$ can be constructed in the expected Chow group $A_{\mathrm{vir.dim}}(M)$ by the methods of Li--Tian \cite{LT} and Behrend--Fantechi \cite{BF}, \cite{Beh}. Virtual fundamental classes play crucial roles in the study of enumerative geometry.

Our first result (Theorem \ref{thm1}) in this paper is to construct the virtual fundamental class in algebraic cobordism, the universal oriented Borel--Moore homology theory \cite{LM}, \cite{LP}. More precisely, if a quasi-projective scheme $M$ carries a perfect obstruction theory, there exists a cobordism class $[M]^{\mathrm{vir}}_{\Omega_\ast}\in \Omega_{\mathrm{vir.dim}}(M)$. Moreover, this universal virtual class yields virtual fundamental classes in other oriented BM homology theories (for example, the virtual cycle in Chow theory and the virtual structure sheaf in $K$-theory \cite{Lee}).

\subsection{Chern numbers}
It is well known that cobordism classes over one point are governed by Chern numbers. Fixing $d > 0$, one can choose a basis $\{{v_I}\}$ of $\Omega_d(\mathrm{pt})_\mathbb{Q}:= \Omega_d(\mathrm{pt})\otimes \mathbb{Q}$ such that the class $[X \rightarrow \mathrm{pt}]$ can be expressed as
\[
 \sum_{\substack{I=(i_1,i_2,\dots, i_d)\\ \sum_{k}{ki_k}=d}}{\int_{X}{c_1(X)^{i_1}c_2(X)^{i_2}\cdots c_d(X)^{i_d}}\cdot v_I},
 \]
where $X$ is nonsingular and of dimension $d$. For any class in $\Omega_\ast (\mathrm{pt})_{\mathbb{Q}}$, Chern numbers are defined to be the coefficients of such $v_I$.

Now assume $M$ is projective. We can push forward the virtual cobordism class $[M]^{\mathrm{vir}}_{\Omega_\ast}$ along the projection $\pi_{M}:M\rightarrow \mathrm{pt}$. The following theorem concerns the Chern numbers~of 
\[
{\pi_M}_\ast [M]^{\mathrm{vir}}_{\Omega_\ast} \in \Omega_{\mathrm{vir.dim}}(\mathrm{pt}).
\]

\begin{thm}\label{thmchernnumber}
The Chern numbers of the cobordism class ${\pi_M}_\ast [M]^{\mathrm{vir}}_{\Omega_\ast}$ are given by the virtual Chern numbers
\[\int_{[M]^{\mathrm{vir}}}{c_1(T^{\mathrm{vir}})^{i_1}c_2(T^{\mathrm{vir}})^{i_2}\cdots c_{\textup{vir.dim}}(T^\mathrm{vir})^{i_\textup{vir.dim}}}.
\]
Here $[M]^{\mathrm{vir}} \in A_{\mathrm{vir.dim}}(M)$ is the (Chow) virtual fundamental class and $T^{\mathrm{vir}} \in K^0(M)$ is the virtual tangent bundle obtained from the perfect obstruction theory.
\end{thm}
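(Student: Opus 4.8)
The plan is to reduce the statement, by means of the construction underlying Theorem \ref{thm1}, to a computation on a smooth ambient variety, and then to run a Grothendieck--Riemann--Roch argument in algebraic cobordism. Writing $c^I(\xi) = c_1(\xi)^{i_1}c_2(\xi)^{i_2}\cdots$ for a sequence $I=(i_1,i_2,\dots)$, and introducing the \emph{non-multiplicative} characteristic class $\mathbf{c}(\xi) := \sum_I c^I(\xi)\,v_I$ of $K$-theory classes, with coefficients in $\Omega_\ast(\mathrm{pt})_\mathbb{Q}$, the theorem is equivalent to the single identity
\[
{\pi_M}_\ast [M]^{\mathrm{vir}}_{\Omega_\ast} \;=\; \int_{[M]^{\mathrm{vir}}} \mathbf{c}(T^{\mathrm{vir}}) \qquad\text{in}\ \ \Omega_{\mathrm{vir.dim}}(\mathrm{pt})_\mathbb{Q},
\]
where on the right only the summands of cohomological degree $\mathrm{vir.dim}$ can contribute. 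The basis $\{v_I\}$ exists precisely because Chern numbers separate cobordism classes over a point (which is what makes the description in the introduction well posed), cf. \cite{LM}, \cite{LP}.

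I would first dispose of the prototype case in which the obstruction theory is globally presented, say $M = Z(s)\subset W$ is cut out by a section $s$ of a vector bundle $V$ on a smooth projective $W$, with the induced perfect obstruction theory, so that $T^{\mathrm{vir}} = (T_W-V)|_M$. Here the construction of Theorem \ref{thm1} produces $[M]^{\mathrm{vir}}_{\Omega_\ast}$ as the localized cobordism Euler class of $s$, whence $i_\ast[M]^{\mathrm{vir}}_{\Omega_\ast} = e_\Omega(V)\cap[W]_\Omega$ for the inclusion $i\colon M\hookrightarrow W$. If $V$ is globally generated, this class equals $i'_\ast[\widetilde M]_\Omega$ for a smooth zero locus $\widetilde M = Z(s')$ of a general section, by the property that the Euler class of a globally generated bundle is represented by the zero locus of a general section, valid in any oriented Borel--Moore homology theory \cite{LM}; pushing to the point yields $[\widetilde M]\in\Omega_{\mathrm{vir.dim}}(\mathrm{pt})$, whose Chern numbers are the ordinary ones $\int_{\widetilde M} c^I(T_{\widetilde M})$. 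As $T_{\widetilde M} = (T_W-V)|_{\widetilde M}$ and $i'_\ast[\widetilde M] = e(V)\cap[W] = i_\ast[M]^{\mathrm{vir}}$ in Chow, the projection formula identifies these with $\int_{[M]^{\mathrm{vir}}} c^I(T^{\mathrm{vir}})$. Besides settling a representative case, this computation fixes the normalizations that any general argument must reproduce; if $V$ fails to be globally generated, the case is subsumed by what follows.

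For an arbitrary projective $M$, I would embed $M\hookrightarrow Y$ in a smooth projective variety and use that, by Theorem \ref{thm1} and \cite{BF}, $i_\ast[M]^{\mathrm{vir}}_{\Omega_\ast}\in\Omega_{\mathrm{vir.dim}}(Y)$ is obtained by applying a cobordism refined Gysin map to the fundamental class of the intrinsic normal cone, presented inside a vector bundle coming from the two-term complex $E^\bullet$; under $\Omega_\ast\to A_\ast$ this recovers $i_\ast[M]^{\mathrm{vir}}$, and the remaining information, valued in the positive-degree part of $\Omega_\ast(\mathrm{pt})$, is again governed by the cone and by $E^\bullet$. Pushing to the point, I would invoke Riemann--Roch for algebraic cobordism: Levine--Morel's rational comparison isomorphism $\Omega_\ast(-)_\mathbb{Q}\cong\bigoplus_{n\ge 0}A_{\ast-n}(-)_\mathbb{Q}\otimes_\mathbb{Q}\Omega_n(\mathrm{pt})_\mathbb{Q}$ \cite{LM} fails to commute with proper push-forward exactly up to multiplication by the universal Todd class $\mathrm{Td}_\Omega$ (the multiplicative class determined by $\int_X\mathrm{Td}_\Omega(T_X)=[X]$ for smooth projective $X$). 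Applied to the regular embedding $i$ and to the zero section occurring in the construction, this introduces Todd factors of the virtual normal bundle $N^{\mathrm{vir}}$ which, together with $\mathrm{Td}_\Omega(T_Y)$ and the relation $T^{\mathrm{vir}} = [T_Y|_M]-[N^{\mathrm{vir}}]$ in $K^0(M)$, collapse by multiplicativity into an integral over $[M]^{\mathrm{vir}}$ of a single characteristic class of $T^{\mathrm{vir}}$. (One could equally run this through the Landweber--Novikov operations, which realize the Chern-number functionals on $\Omega_\ast(\mathrm{pt})$ as natural operations on $\Omega_\ast(-)$.)

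The hard part will be the final identification. One must show that, after all the Todd-type corrections are assembled, the resulting $\Omega_{\mathrm{vir.dim}}(\mathrm{pt})_\mathbb{Q}$-valued integral over $[M]^{\mathrm{vir}}$ equals $\int_{[M]^{\mathrm{vir}}}\mathbf{c}(T^{\mathrm{vir}})$---that is, the virtual cobordism genus of the pair $(M,T^{\mathrm{vir}})$ is computed by the very same characteristic-class recipe as the genus of an honest smooth projective variety. The delicacy is that the reorganization of the universal (multiplicative) Todd class into the $v_I$-expansion is itself not multiplicative, so it cannot simply be applied at the end but must be transported carefully through the refined-Gysin and deformation-to-the-normal-cone steps; the prototype case of the second paragraph is the consistency check that both dictates the normalization and shows the matching is forced. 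Once this bookkeeping is in place, a comparison with the smooth case via the defining property of the $v_I$ completes the proof.
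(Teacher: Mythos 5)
Your overall strategy---compare $\Omega_\ast\otimes\mathbb{Q}$ with a Todd-twisted version of Chow theory via Levine--Morel and track the correction factors through the construction of Theorem \ref{thm1}---is exactly the route the paper takes. But the step you defer as ``the hard part'' is in fact the entire content of the proof, and you have not supplied it. What is needed is the precise identity in $\Omega_\ast(M)_{\mathbb{Q}}\cong A_\ast(M)[{\bf t}]^{({\bf t})}_{\mathbb{Q}}$, namely $[M]^{\mathrm{vir}}_{\Omega_\ast}=\mathrm{Td}_{\bf t}^{-1}(-[T^{\mathrm{vir}}_M])\,[M]^{\mathrm{vir}}$ (Theorem \ref{relation}); the paper proves it by chasing the class through each map in the chain $\Omega_\ast(Y)\to\Omega_\ast(C_{M/Y})\to\Omega_\ast(C_{M/Y}\times_M E_0)\to\Omega_\ast(D^{\mathrm{vir}})\to\Omega_\ast(E_1)\to\Omega_\ast(M)$, computing at each stage the virtual normal bundle of that particular (smooth or l.c.i.) map --- $-[E_0]$ for the projection, $-[T_Y|_M]$ for the quotient by the free $T_Y|_M$-action, $[E_1]$ for the zero section --- and checking that the accumulated factors are exactly $\mathrm{Td}_{\bf t}^{-1}([E_1]-[E_0])$. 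Asserting that the Todd factors ``collapse by multiplicativity'' does not substitute for this verification: the individual spaces $C_{M/Y}$, $D^{\mathrm{vir}}$ are singular cones, so one cannot appeal to a smooth-case Riemann--Roch at each stage, and the step at $D^{\mathrm{vir}}$ (where $\pi_3^\ast$ is an isomorphism and one must solve for the preimage class) is not a formal consequence of functoriality. Your prototype (zero locus of a section) is a useful sanity check but does not feed into the general argument.

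Two smaller points. First, your statement that the Levine--Morel comparison ``fails to commute with proper push-forward exactly up to multiplication by the universal Todd class'' has the mechanism backwards for the formulation actually used: in the twisted theory $A_\ast[{\bf t}]^{({\bf t})}$ the proper push-forwards are untouched ($f_\ast^{({\bf t})}=f_\ast$) and all Todd corrections sit in the l.c.i.\ pullbacks $f^\ast_{({\bf t})}=\mathrm{Td}_{\bf t}^{-1}([N_f])\circ f^\ast$; this is what makes the final push-forward ${\pi_M}_\ast$ to the point cost-free. Second, the ``delicacy'' you raise about the non-multiplicativity of the $v_I$-expansion is a non-issue in the paper's setup: one works throughout with the genuinely multiplicative operator $\mathrm{Td}_{\bf t}^{-1}:K^0\to\mathrm{Aut}(A_\ast[{\bf t}])$, and only at the very end, over the point, does one use the linear-algebra fact that the Conner--Floyd classes $c_I(E)$ with $|I|=m$ span the same $\mathbb{Q}$-vector space as the monomials in the ordinary Chern classes, which converts the coefficients of ${\bf t}^I$ into the stated virtual Chern numbers. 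Without Theorem \ref{relation} and this last observation, your argument does not close.
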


In \cite{CFK} Theorem 4.6.4, Ciocan--Fontanine and Kapranov proved that the virtual Chern numbers of a $[0,1]$-manifold can be realized as the Chern numbers of a ($\mathbb{Z}$-coefficient) cobordism class.\footnote{Over a point, the algebraic cobordism ring $\Omega_\ast({\textup{pt}})$ is canonically isomorphic to the complex cobordism ring.} Our results show that this class actually is obtained from the virtual cobordism class we constructed. This gives an algebro-geometric interpretation of their results. Lowery and Sch\"{u}rg \cite{DAG} constructed fundamental classes in cobordism for quasi-smooth derived schemes, which are expected to give the same classes as ours when the perfect obstruction theory is from the quasi-smooth structure. 

A review of perfect obstruction theories, Chow virtual classes and the precise definition of virtual tangent bundles can be found in Section 1. 

\subsection{Moduli space of stable pairs and descendents} 
Let $X$ be a nonsingular projective 3-fold, and let $\beta \in H_2(X,\mathbb{Z})$ be a nonzero class. We study here the moduli space of stable pairs
\[      [\mathcal{O}_X\xrightarrow{s} F] \in P_n(X, \beta)
\]
where $F$ is a pure sheaf supported on a Cohen--Macaulay subcurve of $X$, $s$ is a morphism with 0-dimensional cokernel, and 
\[  \chi(F)=n, [F]=\beta.
\]
The projective scheme $P_n(X, \beta)$ carries a perfect obstruction theory obtained from the deformation theory of complexes in the derived category and the virtual dimension (vir.dim.) is equal to $ \int_{\beta}c_1(X)$. See \cite{PT} for details.

Since $P_n(X, \beta)$ is a fine moduli space, there exists a universal sheaf
\[
\mathbb{F} \rightarrow X \times P_n(X, \beta).
\]
For a stable pair $[\mathcal{O}_X\rightarrow F] \in P_n(X,\beta)$, the restriction of $\mathbb{F}$ to the fiber
\[
X \times [\mathcal{O}_X \rightarrow F]\subset X\times P_n(X, \beta) 
\] is canonically isomorphic to $F$. Let
\[
\pi_X: X \times P_n(X, \beta) \rightarrow X,\]
\[\pi_P: X \times P_n(X, \beta) \rightarrow P_n(X, \beta)
\] be the projections onto the first and second factors. Since $\mathbb{F}$ has a finite resolution by locally free sheaves, the Chern character of the universal sheaf $\mathbb{F}$ is well-defined.

The descendents $\tau_i(\gamma)$ with $\gamma \in H^\ast(X, \mathbb{Z})$ are defined to be the operators on the homology of $P_n(X, \beta)$
\begin{equation*}
{\pi_P}_\ast\big{(}\pi_X^\ast(\gamma)\cdot \mathrm{ch}_{2+i}(\mathbb{F})\cap \pi_P^\ast(~\cdot~)\big{)}: H_\ast (P_n(X, \beta), \mathbb{Q}) \rightarrow H_\ast (P_n(X , \beta),\mathbb{Q}).
\end{equation*}

For nonzero $\beta \in H_2(X, \mathbb{Z})$ and arbitrary $\gamma_i\in H^{\ast}(X, \mathbb{Z})$, define the stable pairs descendent invariants by
\begin{align*}
 \bigg{<}\prod_{j=1}^{m} \tau_{i_j}(\gamma_j) \bigg{>}_{n,\beta}^X &=
\int_{[P_n(X, \beta)]^{\mathrm{vir}}} \prod_{j=1}^{m} \tau_{i_j}(\gamma_j) \\
&= \int_{[P_n(X, \beta)]} \prod_{j=1}^{m} \tau_{i_j}(\gamma_j) ([P_n(X,\beta)]^{\mathrm{vir}}).
\end{align*}
Then the partition function of descendents is 
\begin{equation*}
Z_\beta^{X}\bigg{(}  \prod_{j=1}^{m} \tau_{i_j}(\gamma_j) \bigg{)} = \sum_n \bigg{<}    \prod_{j=1}^{m} \tau_{i_j}(\gamma_j)   \bigg{>}_{n, \beta}^{X} q^n.
\end{equation*}

Descendent invariants of stable pairs have been studied in \cite{stationary}, \cite{PPrationality}, \cite{rationality}. We have the following theorem relating virtual Chern numbers to descendent invariants. 
\begin{thm}\label{thmdescendents}
For any nonsingular projective 3-fold $X$, the integral
\[\int_{[P_n(X, \beta)]^{\mathrm{vir}}}{c_1(T^{\mathrm{vir}})^{i_1}c_2(T^{\mathrm{vir}})^{i_2}\cdots c_d(T^\mathrm{vir})^{i_d}}
\] can be expressed in terms of descendent invariants. Here $d$ is equal to the virtual dimension $\int_{\beta}c_1(X)$.
\end{thm}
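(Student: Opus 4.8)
The plan is to combine the Pandharipande--Thomas description of the virtual tangent bundle of $P_n(X,\beta)$ with Grothendieck--Riemann--Roch. First I would recall from \cite{PT} that the perfect obstruction theory on $P_n(X,\beta)$ comes from the deformation theory of the universal complex $I^\bullet = [\mathcal{O}_{X\times P_n(X,\beta)}\to\mathbb{F}]$, and that in $K$-theory the virtual tangent bundle is $T^{\mathrm{vir}} = -R{\pi_P}_\ast R\mathcal{H}om(I^\bullet, I^\bullet)_0$, where $(\cdot)_0$ denotes the trace-free part, so that $R\mathcal{H}om(I^\bullet, I^\bullet)_0 = R\mathcal{H}om(I^\bullet, I^\bullet) - \mathcal{O}_{X\times P_n(X,\beta)}$. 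Since $\mathbb{F}$, hence $I^\bullet$, has a finite locally free resolution and $[I^\bullet] = [\mathcal{O}] - [\mathbb{F}]$ in $K$-theory, writing $(\cdot)^\vee$ for the involution $x\mapsto\sum_k(-1)^k x_{2k}$ on cohomology one computes
\[
\mathrm{ch}\big(R\mathcal{H}om(I^\bullet, I^\bullet)_0\big) = \mathrm{ch}(I^\bullet)^\vee\,\mathrm{ch}(I^\bullet) - 1 = \mathrm{ch}(\mathbb{F})^\vee\mathrm{ch}(\mathbb{F}) - \mathrm{ch}(\mathbb{F}) - \mathrm{ch}(\mathbb{F})^\vee .
\]
Then, applying GRR along the smooth projection $\pi_P$, whose relative tangent bundle is $\pi_X^\ast T_X$, gives
\[
\mathrm{ch}(T^{\mathrm{vir}}) = -\,{\pi_P}_\ast\!\Big(\big(\mathrm{ch}(\mathbb{F})^\vee\mathrm{ch}(\mathbb{F}) - \mathrm{ch}(\mathbb{F}) - \mathrm{ch}(\mathbb{F})^\vee\big)\cdot\pi_X^\ast\mathrm{Td}(T_X)\Big).
\]

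Next I would extract the component of $\mathrm{ch}(T^{\mathrm{vir}})$ in a fixed cohomological degree. Every summand is, up to sign, of one of three shapes: ${\pi_P}_\ast(\pi_X^\ast\eta)$, ${\pi_P}_\ast(\pi_X^\ast\eta\cdot\mathrm{ch}_a(\mathbb{F}))$, or ${\pi_P}_\ast(\pi_X^\ast\eta\cdot\mathrm{ch}_a(\mathbb{F})\,\mathrm{ch}_{a'}(\mathbb{F}))$, where $\eta\in H^\ast(X)$ is a fixed class assembled from $\mathrm{Td}(T_X)$ and, since $\mathbb{F}$ is supported in codimension $2$, one has $a,a'\ge 2$. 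The first shape is the scalar $\int_X\eta$. For the second, set $D_i(\gamma) := {\pi_P}_\ast(\pi_X^\ast\gamma\cdot\mathrm{ch}_{2+i}(\mathbb{F}))\in H^\ast(P_n(X,\beta))$; by the projection formula, cap product with $D_i(\gamma)$ is exactly the operator $\tau_i(\gamma)$, and the second shape equals $D_{a-2}(\eta)$. For the third, decompose $\mathrm{ch}_a(\mathbb{F}) = \sum_\mu\pi_X^\ast g_\mu\cdot\pi_P^\ast p_\mu$ and $\mathrm{ch}_{a'}(\mathbb{F}) = \sum_\nu\pi_X^\ast g'_\nu\cdot\pi_P^\ast p'_\nu$ using the Künneth formula for $H^\ast(X\times P_n(X,\beta);\mathbb{Q})$; then the projection formula gives
\[
{\pi_P}_\ast\big(\pi_X^\ast\eta\cdot\mathrm{ch}_a(\mathbb{F})\,\mathrm{ch}_{a'}(\mathbb{F})\big) = \sum_{\mu,\nu}\Big(\int_X\eta\,g_\mu\,g'_\nu\Big)\,p_\mu\,p'_\nu ,
\]
while pairing the $X$-factors against bases dual to $\{g_\mu\}$ and $\{g'_\nu\}$ identifies each $p_\mu$ with some $D_{a-2}(\cdot)$ and each $p'_\nu$ with some $D_{a'-2}(\cdot)$. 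Hence every component of $\mathrm{ch}(T^{\mathrm{vir}})$, and therefore every Chern class $c_k(T^{\mathrm{vir}})$ (a universal polynomial in the $\mathrm{ch}_r(T^{\mathrm{vir}})$), is a $\mathbb{Q}$-polynomial in the descendent classes $D_i(\gamma)$.

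It would then follow that $c_1(T^{\mathrm{vir}})^{i_1}\cdots c_d(T^{\mathrm{vir}})^{i_d}$ is a finite $\mathbb{Q}$-linear combination of products $D_{i_1}(\gamma_1)\cdots D_{i_m}(\gamma_m)$; capping against $[P_n(X,\beta)]^{\mathrm{vir}}$ and taking the degree turns each such product into $\big\langle\prod_j\tau_{i_j}(\gamma_j)\big\rangle^X_{n,\beta}$, which is the asserted expression. The only non-formal input is the Pandharipande--Thomas description of $T^{\mathrm{vir}}$ via $I^\bullet$; the remainder is the GRR calculation together with the Künneth bookkeeping. The step I expect to require the most care is the treatment of the quadratic terms $\mathrm{ch}(\mathbb{F})^\vee\mathrm{ch}(\mathbb{F})$: these are exactly what force products of descendents (rather than single descendents) into the statement, and keeping track of the dual-basis expansion there is the delicate part.
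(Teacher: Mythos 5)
Your proposal is correct and follows essentially the same route as the paper: the Pandharipande--Thomas identification $-T^{\mathrm{vir}}=[R{\pi_P}_\ast R\mathcal{H}om(\mathbb{I}^\bullet,\mathbb{I}^\bullet)_0]$, Grothendieck--Riemann--Roch along $\pi_P$, and a K\"unneth/projection-formula argument to convert the quadratic terms ${\pi_P}_\ast(\pi_X^\ast\eta\cdot\mathrm{ch}_a(\mathbb{F})\,\mathrm{ch}_{a'}(\mathbb{F}))$ into products of single descendents. The only (immaterial) difference is that the paper packages those quadratic terms as ``generalized descendents'' and splits them via the K\"unneth decomposition of the diagonal class $\delta_\ast\gamma$ on $X\times X$, whereas you K\"unneth-decompose $\mathrm{ch}_a(\mathbb{F})$ on $X\times P_n(X,\beta)$ directly and pair against dual bases.
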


Hence we can express the Chern numbers of ${\pi}_\ast [P_n(X, \beta)]^{\mathrm{vir}}_{\Omega_\ast}$ in terms of descendents. 

Note that the same proof shows that this statement holds for Donaldson--Thomas theory. In Gromov--Witten theory, the virtual tangent bundle is closely related to the Hodge bundle on the moduli space of stable maps, and expressions of the Chern classes of the virtual tangent bundle in terms of descendents were studied in \cite{faber} and \cite{Givental}.

 \subsection{Cobordism invariants}
 Let $\pi: P_n(X, \beta) \rightarrow \mathrm{pt}$ be the projection and $d$ be the virtual dimension $\int_\beta c_1(X)$. We obtain the cobordism invariant $[P_{n,\beta}]$ by pushing forward the cobordism virtual fundamental class, $i.e.$
 \[
 [P_{n,\beta}]: = \pi_\ast [P_n(X, \beta)]_{\Omega_\ast}^{\mathrm{vir}} \in \Omega_d(\mathrm{pt}).
  \]
By Theorem \ref{thmchernnumber} $[P_{n,\beta}]$ are deformation invariants. Note that when $X$ is a Calabi--Yau 3-fold, we have $d=0$. Then the invariants $[P_{n,\beta}] \in \mathbb{Z}$ are just the degree of the Chow virtual cycle, which are known as the Pandharipande--Thomas invariants, see Definition 2.16 of \cite{PT}.

Now we define the partition function of cobordism invariants of stable pairs to be
\begin{equation*}
Z_{\Omega_\ast,\beta}^{X}(q):=\sum_n{[P_{n,\beta}] q^n} \in \Omega_\ast(\mathrm{pt})\otimes {\mathbb{Q}}((q)).
\end{equation*}
If we choose a basis $\{ v_I\}$ of $\Omega_d(\mathrm{pt})_\mathbb{Q}$, the partition function can be written as
\begin{equation} \label{partition}
Z_{\Omega_\ast,\beta}^{X}(q) = \sum_{I}f_{I}(q) \cdot v_I.
\end{equation}
Here $f_I(q) \in \mathbb{Q}((q))$ is a Laurent series in $q$. It is clear that the right-hand side is a finite sum.

\begin{conj} \label{mainconj}
The partition function $Z_{\Omega_\ast,\beta}^{X}(q)$ satisfies the following properties:
\begin{enumerate}
\item{\bf \textup{(Rationality)}} $Z_{\Omega_\ast,\beta}^{X}(q)$ is a rational function in $q$. More precisely, in the expression of (\ref{partition}), each function $f_I(q)$ is the Laurent expansion of a rational function in $q$.

\item{\bf \textup{(Functional equation)}}  The rational function $Z_{\Omega_\ast,\beta}^{X}(q)$ satisfies the functional equation
\begin{equation*}
Z_{\Omega_\ast,\beta}^{X}(q^{-1}) = q^{-d} Z_{\Omega_\ast,\beta}^{X}(q),
\end{equation*}
$i.e.$, each function $f_I(q)$ satisfies this functional equation.
\end{enumerate}
\end{conj}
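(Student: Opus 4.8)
The plan is to establish part (1) of Conjecture \ref{mainconj} for nonsingular projective toric 3-folds by reducing it, through Theorems \ref{thmchernnumber} and \ref{thmdescendents}, to the known rationality of descendent partition functions; the functional equation, part (2), is the genuinely difficult half. Concretely, by the definition of the coefficients $f_I$ in (\ref{partition}) together with Theorem \ref{thmchernnumber},
\[
  f_I(q) \ = \ \sum_n \left( \int_{[P_n(X, \beta)]^{\mathrm{vir}}} c_1(T^{\mathrm{vir}})^{i_1} c_2(T^{\mathrm{vir}})^{i_2} \cdots c_d(T^{\mathrm{vir}})^{i_d} \right) q^n .
\]
By Theorem \ref{thmdescendents} each integral occurring here is a finite $\mathbb{Q}$-linear combination of stable pairs descendent invariants $\langle \prod_j \tau_{k_j}(\gamma_j) \rangle_{n, \beta}^X$; moreover, since $\mathrm{ch}(T^{\mathrm{vir}})$ is computed from $\mathrm{ch}(\mathbb{F})$ and the Chern classes of $X$ by Grothendieck--Riemann--Roch, both the descendent insertions and the numerical coefficients of the combination are independent of $n$. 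Summing over $n$, each $f_I(q)$ is therefore a finite $\mathbb{Q}$-linear combination of the descendent partition functions $Z_\beta^X\big(\prod_j \tau_{k_j}(\gamma_j)\big)$. Only smoothness and projectivity of $X$ are used here.

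For nonsingular projective toric $X$ the rationality of every descendent partition function $Z_\beta^X\big(\prod_j \tau_{k_j}(\gamma_j)\big)$ is established in \cite{PPrationality}, \cite{rationality}. Since a finite $\mathbb{Q}$-linear combination of Laurent expansions of rational functions is again the Laurent expansion of a rational function, each $f_I(q)$, and hence $Z_{\Omega_\ast, \beta}^X(q)$, is rational; this proves part (1) in the toric case. The same reduction shows, moreover, that part (1) for an arbitrary nonsingular projective 3-fold $X$ would follow from the conjectural rationality of its descendent series.

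The functional equation does not follow formally from this reduction: the individual descendent series $Z_\beta^X\big(\prod_j \tau_{k_j}(\gamma_j)\big)$ are not symmetric under $q \leftrightarrow q^{-1}$ in general, and their behaviour under $q \mapsto q^{-1}$ is governed by a more intricate functional equation that transforms the descendent insertions. The structural reason to expect part (2) nonetheless is the Serre self-duality of the stable pairs obstruction theory: when $X$ is Calabi--Yau one has $d = 0$, the obstruction theory is the symmetric one of \cite{PT}, and the conjecture reduces exactly to the rationality and $q^{-1} \leftrightarrow q$ symmetry of the Pandharipande--Thomas invariants; for general $X$ the complex $R\mathcal{H}om_{\pi_P}(\mathbb{I}^\bullet, \mathbb{I}^\bullet)_0$ is self-dual only up to a twist by $\pi_X^\ast \omega_X$ and a shift, and the factor $q^{-d}$ should be exactly the contribution of that twist. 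Making this precise --- presumably by running the toric localization of \cite{PPrationality} and tracking, term by term, how each descendent and each vertex and edge contribution is weighted under $q \mapsto q^{-1}$, then verifying that the descendent polynomials representing the Chern monomials $c_1(T^{\mathrm{vir}})^{i_1} \cdots c_d(T^{\mathrm{vir}})^{i_d}$ transform by the overall scalar $q^{-d}$ --- is, I expect, the main obstacle.
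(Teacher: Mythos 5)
Your reduction of part (1) in the toric case is exactly the paper's proof of Theorem \ref{toric}: write $f_I(q)=\sum_n c_n^I q^n$ with $c_n^I$ the virtual Chern numbers, use Theorem \ref{thmdescendents} (with the $n$-independence of the Grothendieck--Riemann--Roch coefficients, which the paper leaves implicit) to express each $f_I$ as a finite $\mathbb{Q}$-linear combination of descendent partition functions, and invoke the rationality of \cite{PPrationality}. Your assessment of part (2) also matches the paper, which proves nothing there --- Remark \ref{mystery} likewise observes that the functional equation does not follow from descendent properties, and the paper offers only localization-based numerical evidence.
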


When $X$ is Calabi--Yau, it is clear that our conjectures specialize to Conjecture 3.2 of \cite{PT} which has been proven in \cite{Bri} (Theorem 1.1). This is part of the conjectures equating Gromov--Witten theory with sheaf counting theory for Calabi--Yau 3-folds \cite{MNOP1} \cite{PT}. See also \cite{MNOP2}, \cite{MOOP}, \cite{BPS}, \cite{toric}, \cite{quintic}, \cite{Toda1}, \cite{Toda2}, and \cite{ST} for related works. However, contrary to the case of Calabi--Yau 3-folds, calculations by computer show that the simplest form of DT/Pairs correspondence 
\[
\Big{(}\sum_n{[P_{n,\beta}] q^n}\Big{)}\Big{(}\sum_{m} [I_{n,0}] q^m \Big{)} = \sum_{n} [I_{n,\beta}] q^n\]
doesn't hold for any nonsingular projective 3-fold $X$ in cobordism. Here $[I_{n,0}
] \in \mathbb{Z}$ and $[I_{n,\beta}] \in \Omega_\ast (\mathrm{pt})$ are the corresponding invariants obtained from the Donaldson--Thomas moduli spaces $I_n(X, \beta)$. It would be interesting to find out the correct form of the cobordism DT/Pairs correspondence (if it exists).

Specializations of the cobordism class like the virtual elliptic genus, the virtual $\chi_{-y}$ genus and the virtual Euler characteristic were studied in \cite{FG}. All these invariants for stable pairs theory are expected to satisfy Conjecture \ref{mainconj}.

The rationality of toric cases is proven in this paper.
\begin{thm} \label{toric}
Conjecture \ref{mainconj} (1) is true if $X$ is a nonsingular projective toric 3-fold.
\end{thm}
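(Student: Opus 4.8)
The plan is to reduce the statement to the rationality of the descendent partition functions $Z^X_\beta\big(\prod_j \tau_{i_j}(\gamma_j)\big)$, which is known for nonsingular projective toric 3-folds by the theory of descendents \cite{PPrationality}, \cite{rationality}.

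First I would fix a basis $\{v_I\}$ of $\Omega_d(\mathrm{pt})_{\mathbb{Q}}$ as in \eqref{partition}. Since $\Omega_\ast(\mathrm{pt})_{\mathbb{Q}}$ is a polynomial ring and a degree-$d$ cobordism class over a point is determined by its Chern numbers, the coefficient $f_I(q)=\sum_n c_{I,n}\,q^n$ has each $c_{I,n}$ equal to a fixed $\mathbb{Q}$-linear combination — with coefficients depending only on the chosen basis, not on $n$ — of the Chern numbers of $\pi_\ast [P_n(X,\beta)]^{\mathrm{vir}}_{\Omega_\ast}$. By Theorem \ref{thmchernnumber} these Chern numbers are exactly the virtual Chern numbers $\int_{[P_n(X,\beta)]^{\mathrm{vir}}} c_1(T^{\mathrm{vir}})^{i_1}\cdots c_d(T^{\mathrm{vir}})^{i_d}$. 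Hence $f_I(q)$ is a finite $\mathbb{Q}$-linear combination of the generating series $\sum_n \big(\int_{[P_n(X,\beta)]^{\mathrm{vir}}} \prod_k c_k(T^{\mathrm{vir}})^{i_k}\big)\, q^n$.

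Second I would apply Theorem \ref{thmdescendents}. The point to verify carefully is that the expression of $\int_{[P_n(X,\beta)]^{\mathrm{vir}}} \prod_k c_k(T^{\mathrm{vir}})^{i_k}$ in terms of descendent invariants is uniform in $n$: the number and degrees of the descendent insertions are bounded in terms of $d=\int_\beta c_1(X)$ and the topology of $X$ alone, and the rational coefficients are independent of $n$. This follows from the Grothendieck--Riemann--Roch computation underlying Theorem \ref{thmdescendents}, since $\mathrm{ch}(T^{\mathrm{vir}})$ is a universal polynomial in the components $\mathrm{ch}_{2+i}(\mathbb{F})$ pushed forward along $\pi_P$, with coefficients built only from the Chern classes of $X$ and the fixed class $\beta$. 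Consequently each generating series above, and therefore each $f_I(q)$, is a finite $\mathbb{Q}$-linear combination of descendent partition functions $Z^X_\beta\big(\prod_j \tau_{i_j}(\gamma_j)\big)$ with the $\gamma_j$ ranging over a fixed basis of $H^\ast(X,\mathbb{Z})$.

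Finally I would invoke the rationality of descendent partition functions for nonsingular projective toric 3-folds \cite{PPrationality}, \cite{rationality}: each $Z^X_\beta\big(\prod_j \tau_{i_j}(\gamma_j)\big)$ is the Laurent expansion of a rational function in $q$. Since a finite $\mathbb{Q}$-linear combination of rational functions is again rational, each $f_I(q)$ is rational, which is Conjecture \ref{mainconj}(1). The main obstacle is not this reduction but the uniformity bookkeeping in the middle step — ensuring that the passage through Theorems \ref{thmchernnumber} and \ref{thmdescendents} produces, for each $I$, genuinely finitely many descendent series with $n$-independent coefficients; once that is in place, the theorem follows from the known rationality in the theory of descendents.
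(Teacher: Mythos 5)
Your proposal is correct and follows essentially the same route as the paper: choose the basis $\{v_I\}$ so that the coefficients $f_I(q)$ are partition functions of virtual Chern numbers (Theorem \ref{thmchernnumber}), express these via Theorem \ref{thmdescendents} as finite $\mathbb{Q}$-linear combinations of descendent partition functions, and conclude by the rationality of descendents for toric 3-folds (Theorem \ref{PP}). Your extra attention to the $n$-independence of the Grothendieck--Riemann--Roch coefficients is a point the paper leaves implicit, but it holds for exactly the reason you give.
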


\subsection{Plan of the paper}
We start with a review of perfect obstruction theories and the deformation to the normal cone in Section 1. The construction of the virtual cobordism class is given in Section 1.3. In Section 2, we introduce twisted Chow theory. The isomorphism between algebraic cobordism and twisted Chow theory gives formulas for the Chern numbers of the virtual cobordism class. Hence Theorem \ref{thmchernnumber} is proved. In Section 3 and 4, we study cobordism invariants of the moduli space of stable pairs. By a Grothendieck--Riemann--Roch calculation, we express the Chern characters of the virtual tangent bundle in terms of descendents in Section 3, which proves Theorem \ref{thmdescendents}. As a consequence, we prove Theorem \ref{toric} in Section 4 by the rationality of descendents. Finally we calculate several examples via localization to support Conjecture \ref{mainconj} (2).

\subsection{Notation} We work over the field $\mathbb{C}$.  All schemes in this paper are assumed to be quasi-projective. We use $\mathrm{pt}$ to denote the scheme $\mathrm {Spec}(\mathbb{C})$. The functors of Chow theory and algebraic cobordism theory are denoted by $A_\ast$ and $\Omega_\ast$. We work with $\mathbb{Z}$-coefficients unless stated otherwise. For a nonsingular scheme $X$, the cobordism fundamental class $[X\xrightarrow{\mathrm{id}}X] \in \Omega_{\mathrm{dim}(X)}(X)$ is denoted by ${\bf 1}_X$. When $E$ is a vector bundle (or coherent sheaf) on $X$, we use the notation $E|_U$ to denote the pull back of $E$ to the subscheme $U$ of $X$.

\subsection{Acknowledgement}
First, I would like to thank my advisor Rahul Pandharipande for suggesting this topic, sharing his ideas and many insightful conversations. His sharing of communications with Marc Levine about algebraic cobordism was very helpful. Discussions with Andrew Morrison, Goerg Oberdieck and Timo Sch{\"u}rg about the virtual fundamental class and the moduli space of stable pairs play an important role. Thanks to Ionut Ciocan-Fontanine, Andrew Morrison, Christoph Schiessl, and Qizheng Yin for their valuable comments, and to the referee for suggesting several improvements. Also, I would like to thank Zhihong Tan for his help with MATLAB.

The author was supported by grant ERC-2012-AdG-320368-MCSK in the group of Pandharipande at ETH Z{\"u}rich.

\section{Virtual fundamental class in algebraic cobordism}
\subsection{Perfect obstruction theory}
We work on a quasi-projective scheme $M$ and assume it is globally embedded into a nonsingular scheme $Y$ with the corresponding ideal sheaf $I$. A perfect obstruction theory on $M$ consists of the following data:
\begin{enumerate}
\item[(1)] A two term complex of locally free sheaves $E^\bullet= [E^{-1}\rightarrow E^0]$ on $M$.
\item[(2)] A morphism $\phi: E^\bullet \rightarrow L_M^\bullet$ in the derived category of coherent sheaves to the cotangent complex $L_M^\bullet$ satisfying that $\phi$ induces an isomorphism in cohomology in degree 0 and a surjection in cohomology in degree -1.
\end{enumerate}

Clearly, only the information of the truncated complex $L_M^{\bullet{\ge -1}}(=[L^{-1}/\mathrm{Im}(L^{-2}) \rightarrow L^{0}\rightarrow \cdots]$) is  used in the data of a perfect obstruction theory. Hence we can use the explicit representative
\[ L_M^{\bullet{\ge -1}} =  [I/I^2 \rightarrow \Omega_Y|_M].
\]

A morphism $\phi^\bullet:E^\bullet \rightarrow [I/I^2 \rightarrow \Omega_Y|_M]$ in the derived category means there exists a two term complex $[G^{-1}\rightarrow G^0]$ of coherent sheaves such that $G^\bullet \xrightarrow{\varphi_1}E^\bullet$ and $G^\bullet \xrightarrow{\varphi_2}[I/I^2 \rightarrow \Omega_Y|_M]$ are both morphisms of complexes with $\varphi_1$ being a quasi-isomorphism and $\varphi_2$ inducing an isomorphism in the cohomology $h^0$ and epimorphism in $h^{-1}$. Since $M$ is quasi-projective, any coherent sheaf on $M$ can be written as the quotient of a locally free sheaf. It implies that we may choose both $G^0$ and $G^{-1}$ to be locally free. Hence we can use the following data as a perfect obstruction theory on a quasi-projective scheme $M$:

\begin{equation}\label{diag1}
\begin{CD}
E^{-1} @>>> E^0 \\
@VVV         @VVV\\
I/I^2     @>>> \Omega_Y|_M
\end{CD}
\end{equation} 
where  $h^0(\phi)$ is an isomorphism and $h^{-1}(\phi)$ is an epimorphism.

Equivalently, such data can also be written in the language of abelian cones (or linear spaces in the sense of \cite{Siebert})
\[
\Phi_\bullet:[T_Y|_M \rightarrow N_{M/Y} ] \rightarrow [E_0 \rightarrow E_1]
\]
where $E_i=(E^{-i})^{\vee}(i=0,1)$ and $N_{M/Y}= \mathrm{Spec}(\mathrm{Sym}(I/I^2))$ is the corresponding normal sheaf. The morphism $\Phi_\bullet$ induces an isomorphism on the cohomology $H^0$ and a closed embedding on $H^1$.

Hence we get the following short exact sequence of abelian cones from (\ref{diag1})
\begin{equation}\label{es1}
0\rightarrow T_Y|_M \rightarrow N_{Y/M}\times_M E_0\rightarrow C(Q) \rightarrow 0
\end{equation}
where $C(Q)$ is a closed sub-abelian cone of $E_1$. 

The normal cone $C_{M/Y} = \mathrm{Spec}( \bigoplus_{n\ge 0}I^n/I^{n+1} )$ is a sub-cone of the normal sheaf $N_{M/Y}$, and the cone $C_{M/Y} \times_M E_0$ is a $T_Y|_M$-invariant sub-cone of the abelian cone $N_{M/Y}\times_ME_0$. Moreover, $T_Y|_M$ acts on $C_{M/Y}\times _M E_0$ freely and fiberwise (see \cite{BF}). Hence the quotient cone 
\[D^{\mathrm{vir}}:= \frac{C_{M/Y}\times _M E_0}{T_Y|_M}  \]
is a closed subscheme of the vector bundle $E_1$. It is easily checked that $\mathrm{dim}(D^{\mathrm{vir}})= \mathrm{rk}(E_0)$. The Chow virtual class defined in \cite{LT} and \cite{BF} is exactly the refined intersection of $D^{\mathrm{vir}}$ with the zero section of the vector bundle $E_1$, $i.e.$
\begin{equation*}
[M]^{\mathrm{vir}} = 0_{E_1}^{!}[D^{\mathrm{vir}}]\in A_{\mathrm{rk}(E_0)-\mathrm{rk}(E_1)}(M).
\end{equation*}

Finally we define the virtual tangent bundle to be the $K$-theory class 
\[
T_M^{\mathrm{vir}} := [E_0] - [E_1] \in K^0(M).
\] It is clear that $T_M^{\mathrm{vir}}$ only depends on the perfect obstruction theory.\footnote{It is independent of the choice of global resolution of the tangent-obstruction bundles $E^\bullet$.} The virtual dimension is equal to $\mathrm{rk}(E_0) - \mathrm{rk}(E_1)$.

\subsection{Deformation to the normal cone}
With the assumption and notation above, we will construct a morphism
\begin{equation*}
\sigma: \Omega_\ast(Y) \rightarrow \Omega_{\ast}(C_{M/Y})
\end{equation*}
by the technique of deformation to the normal cone (see Chapter 5 of \cite{fulton}, 3.2.1 and 6.5.2 of \cite{LM}). Then from this map we obtain a canonical class $\sigma({\bf 1}_Y) \in \Omega_{\textup{dim}(Y)}(C_{M/Y})$ which can serve as the fundamental class of the normal cone $C_{M/Y}$, although $C_{M/Y}$ may not be smooth generally. The idea is to construct a family of embeddings $M \hookrightarrow Y_t$ parametrized by $t \in \mathbb{P}^1$, such that for $t \neq 0$, the embedding is the given embedding of $M$ in $Y$, and for $t = 0$ one has the zero section embedding of $M$ in $C_{M/Y}$. We review the construction below.

Consider the embedding of $M$ in $Y\times \mathbb{P}^1$ which is the composition of $M\hookrightarrow Y$ and the embedding of $Y$ in $Y \times \mathbb{P}^1$ at $0\in \mathbb{P}^1$. Define
\[W:=\mathrm{ Bl}_{M\times\{0\}}(Y\times \mathbb{P}^1).
\]
There is a  projection
\[\pi: W \rightarrow \mathbb{P}^1
\]
such that 
\begin{enumerate}
\item $\pi^{-1}(t)=Y$ when $t\neq0$;
\item $\pi^{-1}(0)= \mathrm{Bl}_{M}(Y) \cup P(C_{M/Y}\oplus O_M)$.
\end{enumerate} Here $P(C_{M/Y}\oplus O_M)$ denotes the projective cone associated to the affine cone $C_{M/Y} \oplus O_M$. 
Now we denote by $W^{\circ}$ the open subset of $W$ obtained by removing the component $\mathrm{Bl}_{M}(N)$ from the fiber over 0. Therefore the fiber over 0 in $W^{\circ}$ is $C_{M/Y}$.

Consider the following diagram
\begin{equation}\label{dfnc}
\begin{CD}
\Omega_{k+1}(C_{M/Y}) @>{j_\ast}>> \Omega_{k+1}(W^{\circ}) @>>> \Omega_{k+1}(Y \times \mathbb{A}^1) @>>> 0 \\
@.                                                    @V{j^{\ast}}VV                  @A{\mathrm{pr}^{\ast}}AA\\
                                 @.                   \Omega_{k}(C_{M/Y})                 @<{\sigma}<<  \Omega_k(Y).
\end{CD}
\end{equation}
The top row is the localization exact sequence (Theorem 3.2.7 of \cite{LM}). Since the divisor $C_{M/Y}$ moves in $W^{\circ}$, the composition map $j^\ast j_\ast$ is zero. So we get the following map
\[
\sigma:\Omega_k(Y) \cong \Omega_{k+1}(Y\times \mathbb{A}^1) \rightarrow \Omega_{k}(C_{M/Y}).
\]

\subsection{Virtual class in $\Omega_{\mathrm{vir.dim}}(M)$}
Recall that since $T_Y|_M$ acts on $C_{M/Y}\times_XE_0$ freely and $D^{\mathrm{vir}}$ is the quotient cone, the cone $C_{M/Y} \times_ME_0$ is a $p^{\ast}(T_Y|_M)$-principal homogeneous space over $D^{\mathrm{vir}}$, where $p: D^{\mathrm{vir}} \rightarrow M$. By the homotopy invariance property (Theorem 3.6.3 of \cite{LM}), there is an isomorphism:
\[\Omega_\ast(D^{\mathrm{vir}}) \cong \Omega_{\ast+\mathrm{dim}(Y)}(C_{M/Y}\times_ME_0).
\]
Hence we get the following chain of maps 
\begin{align*}   
\Omega_\ast(Y)  \xrightarrow{~~~\sigma~~~}  \Omega_{\ast}(C_{M/Y})  \xrightarrow{~~\simeq~~}   \Omega_{\ast + \mathrm{rk}(E_0)}(C_{M/Y}\times_ME_0)  \simeq  \Omega_{\ast+\mathrm{rk}(E_0)-\mathrm{dim}(Y)}(D^{\mathrm{vir}})\\
\xrightarrow{~~e_\ast~~} \Omega_{\ast  +\mathrm{rk}(E_0)-\mathrm{dim}(Y)}(E_1) \xrightarrow{~~0_{E_1}^!~~} \Omega_{\ast+\mathrm{rk}(E_0)-\mathrm{rk}(E_1)-\mathrm{dim}(Y)}(M).
\end{align*}
Here $e_\ast$ is induced by the closed embedding $e: D^{\textup{vir}} \rightarrow E_1$.
\begin{thm}\label{thm1}
Let $M$ be a scheme which is embedded into a nonsingular scheme $Y$. Given a perfect obstruction theory $E^\bullet=[E^{-1}\rightarrow E^0]\rightarrow L_M^\bullet$ on $M$, there is a morphism of algebraic cobordism groups
\begin{equation}
\sigma_{M/Y}: \Omega_{\ast}(Y) \rightarrow \Omega_{\ast+\mathrm{rk}(E_0)-\mathrm{rk}(E_1)-\mathrm{dim}(Y)}(M).
\end{equation}
Moreover, the image $\sigma_{M/Y}({\bf 1}_Y) \in \Omega_{\mathrm{rk}(E_0)-\mathrm{rk}(E_1)}(M)$ is independent of the choice of embedding $M\hookrightarrow Y$, and representative of the perfect obstruction theory.
\end{thm}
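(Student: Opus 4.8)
The plan is to carry out the construction in two stages. First, existence of the morphism $\sigma_{M/Y}$ is immediate from the chain of maps assembled in Section 1.3: the deformation-to-the-normal-cone map $\sigma$ of diagram (\ref{dfnc}), followed by the homotopy-invariance isomorphism $\Omega_\ast(C_{M/Y}) \cong \Omega_{\ast+\mathrm{rk}(E_0)}(C_{M/Y}\times_M E_0)$, followed by the principal-homogeneous-space identification with $\Omega_{\ast+\mathrm{rk}(E_0)-\mathrm{dim}(Y)}(D^{\mathrm{vir}})$, followed by pushforward $e_\ast$ along the closed embedding into $E_1$, and finally the refined Gysin map $0_{E_1}^!$. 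Each arrow is a well-defined homomorphism of cobordism groups by the cited theorems of \cite{LM}, so $\sigma_{M/Y}$ exists and has the stated degree. What remains — and this is the substance of the theorem — is to prove that $\sigma_{M/Y}({\bf 1}_Y)$ does not depend on the two choices: the global embedding $M \hookrightarrow Y$ into a nonsingular scheme, and the representative of the perfect obstruction theory.

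For independence of the representative of the obstruction theory, I would fix the embedding $M \hookrightarrow Y$ and compare two global resolutions $E^\bullet$ and $\widetilde E^\bullet$ of the perfect obstruction theory. The standard move is to reduce to the case where one resolution dominates the other, i.e. there is a quasi-isomorphism $\widetilde E^\bullet \to E^\bullet$ realized by an actual surjection of complexes with locally free kernel; a general pair of resolutions is then connected by passing to a common refinement. In the dominating case one gets a short exact sequence of vector bundles relating $E_1$ and $\widetilde E_1$ (and likewise $E_0$, $\widetilde E_0$), so that $\widetilde E_1$ is, up to a bundle pulled back from $M$, an affine bundle over $E_1$, and $\widetilde D^{\mathrm{vir}}$ sits over $D^{\mathrm{vir}}$ compatibly. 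The key point is that the refined Gysin map $0^!$ and the homotopy-invariance isomorphism are compatible with such bundle extensions — this is exactly the kind of excess-intersection bookkeeping done in \cite{BF} for Chow groups, and it transfers verbatim to algebraic cobordism because the needed formal properties (projection formula, compatibility of Gysin maps in a fiber square, homotopy invariance) are part of the axioms of an oriented Borel–Moore homology theory in \cite{LM}.

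For independence of the embedding, I would again reduce to a comparison of two embeddings $M \hookrightarrow Y$ and $M \hookrightarrow Y'$ via a common one: the diagonal embedding $M \hookrightarrow Y \times Y'$, which factors through both. So it suffices to treat the case of a factorization $M \hookrightarrow Y \hookrightarrow Y'$ with $Y, Y'$ nonsingular, where $Y \hookrightarrow Y'$ is a closed immersion of smooth schemes (or, after a further reduction, where $Y' = Y \times \mathbb{A}^N$ with $M$ sitting in the zero section, the case one really needs for the diagonal trick). Here $C_{M/Y'}$ is the total space of the bundle $C_{M/Y} \times_M N_{Y/Y'}|_M$, the obstruction-theory data get modified by the extra normal directions, and one checks that the two quotient cones $D^{\mathrm{vir}}$ differ by the same vector bundle factor that appears in $E_1$ versus $E_1'$, so that applying $0^!_{E_1'}$ recovers $0^!_{E_1}$ after using the excess intersection formula. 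Throughout, the main technical obstacle — and the step I would spend the most care on — is verifying that the map $\sigma$ coming from deformation to the normal cone behaves functorially under these changes: concretely, that for $M \hookrightarrow Y \hookrightarrow Y'$ the square relating $\sigma_{M/Y'}$, $\sigma_{M/Y}$ and the bundle projection $C_{M/Y'} \to C_{M/Y}$ commutes. This is the cobordism analogue of the compatibility Fulton establishes for specialization maps in \cite{fulton}, and while morally routine, it requires carefully tracking the double-deformation space $\mathrm{Bl}_{M}(\mathrm{Bl}_{M}(Y'\times\mathbb{P}^1)\times\mathbb{P}^1)$ and invoking the localization sequence of \cite{LM} twice; once this commutativity is in hand, both independence statements follow by the reductions above.
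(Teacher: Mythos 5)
Your overall architecture (the chain of maps gives $\sigma_{M/Y}$ for free; the content is the two independence statements; reduce to a single comparison via the diagonal) matches the paper, and your treatment of independence of the resolution of the obstruction theory is essentially the paper's: one reduces to a morphism of resolutions $E_\bullet \to G_\bullet$ and uses functoriality of refined Gysin maps for the closed immersion $E_1 \hookrightarrow G_1$ of vector bundles. However, your reduction for independence of the embedding goes the wrong way, and this is a genuine gap. Given two embeddings $M\hookrightarrow Y$ and $M\hookrightarrow Y'$, the diagonal embedding $M\hookrightarrow Y\times Y'$ dominates each of them via the \emph{smooth projections} $Y\times Y'\to Y$ and $Y\times Y'\to Y'$; there is in general no closed immersion $Y\hookrightarrow Y\times Y'$ through which the diagonal embedding of $M$ factors (that would require extending $M\to Y'$ to a morphism $Y\to Y'$), nor can one arrange $Y'=Y\times\mathbb{A}^N$ with $M$ landing in the zero section. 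So the case you must actually treat is: $i\colon M\to Y$, $i'\colon M\to Y'$ with a \emph{smooth} morphism $\pi\colon Y'\to Y$ satisfying $i=\pi\circ i'$, which is the case the paper handles.

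This changes the nature of the key compatibility for $\sigma$. You propose to prove compatibility of the deformation-to-the-normal-cone map with a closed immersion of ambient spaces via a double deformation space, in the style of Fulton's compatibility of specialization maps. That statement is both harder than needed and not the one required. What is required is that $\sigma$ commutes with smooth pullback (Lemma \ref{lem1} of the paper): the smooth map $\pi\colon Y'\to Y$ extends to a smooth morphism $W^\circ_{Y'}\to W^\circ_{Y}$ of the deformation spaces inducing $C_{M/Y'}\to C_{M/Y}$ on the fibers over $0$, and the claim follows by chasing a lift of a class through the resulting Cartesian square and the localization sequence (\ref{dfnc}) --- no double deformation space is needed. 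After that, the smooth-projection case is finished by the identity $0^!_{E_1}p_1^\ast\alpha_{M/Y'}=0^!_{E_1}g^\ast p_2^\ast\alpha_{M/Y}=f^\ast 0^!_{E_1}p_2^\ast\alpha_{M/Y}$, i.e.\ commutation of the refined Gysin map with smooth pullback. Finally, note that with $\mathbb{Q}$-coefficients all of this can be bypassed: Theorem \ref{relation} identifies $[M]^{\mathrm{vir}}_{\Omega_\ast}$ with $\mathrm{Td}_{\bf t}^{-1}(-[T_M^{\mathrm{vir}}])[M]^{\mathrm{vir}}$, which is manifestly independent of the choices because the Chow virtual class is (Remark \ref{easy}); the delicate argument is only needed to control $\mathbb{Z}$-coefficients.
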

%\\
The map $\sigma_{M/Y}$ is just given by the chain above. We will prove the independence below. Note that in the $\mathbb{Q}$-coefficient cobordism group, the independence is a direct conclusion of Theorem \ref{relation} (see Remark \ref{easy}). The following part is to treat the $\mathbb{Z}$-coefficient case which may be skipped.\footnote{Readers can jump to Definition \ref{skip} directly.} 

Now we prove the independence. Since two different embeddings are dominated by the diagonal embedding, we only need to treat the following case.

Let $i:M \rightarrow Y$ and $i': M\rightarrow Y'$ be two embeddings with a smooth map $\pi: Y' \rightarrow Y$ such that $i=\pi \circ i'$. Assume there are two representatives of a perfect obstruction theory: $[T_Y|_M \rightarrow N_{M/Y}] \rightarrow [E_0 \rightarrow E_1]$ and $[T_{Y'}|_M \rightarrow N_{M/Y'}] \rightarrow [F_0 \rightarrow F_1]$. Then there is a two term complex of vector bundles $[G_0 \rightarrow G_1]$ such that $E_\bullet \rightarrow G_\bullet$ and $F_\bullet \rightarrow G_\bullet$ are both quasi-isomorphisms. It suffices to show:
\begin{enumerate}
\item the representatives $[T_Y|_M \rightarrow N_{M/Y}] \rightarrow E_\bullet$ and $[T_{Y'}|_M \rightarrow N_{M/Y'}] \rightarrow E_\bullet$ yield the same virtual class;
\item the representatives $[T_{Y'}|_M \rightarrow N_{M/Y'}] \rightarrow E_\bullet$ and $[T_{Y'}|_M \rightarrow N_{M/Y'}] \rightarrow G_\bullet$ yield the same virtual class.
\end{enumerate}

To simplify the proof, we introduce here an equivalent description of the map $\sigma_{M/Y}$ which was used in \cite{GP}. Consider the following Cartesian diagrams (see \cite{GP} Section 3, (12))
\[
\begin{CD}
T_Y|_M @>>>  C_{M/Y} \times_M E_0\\
@VVV    @VVV\\
M @>>> D^{\mathrm{vir}} \\
@V{\textup{id}}VV  @VVV\\
M @>{0_{E_1}}>>E_1.
\end{CD}
\]
It is easy to see that $\sigma_{M/Y}$ can be obtained by the following chain of maps
\begin{align*}
\Omega_\ast(Y)  \xrightarrow{~~~\sigma~~~}  \Omega_{\ast}(C_{M/Y})  \xrightarrow{~~\simeq~~}  \Omega_{\ast + \mathrm{rk}(E_0)}(C_{M/Y}\times_ME_0)  \xrightarrow{0_{E_1}^!}  \\
\Omega_{\ast +\mathrm{rk}(E_0)-\mathrm{rk}(E_1)}(T_Y|_M) \xrightarrow{0_{T_Y}^\ast} \Omega_{\ast+\mathrm{rk}(E_0)-\mathrm{rk}(E_1)-\mathrm{dim}(Y)}(M).
\end{align*}

\bigskip
Assume the image $\sigma({\bf 1}_Y)$ (resp. $\sigma({\bf 1}_{Y'})$) is denoted by $\alpha_{M/Y}$ (resp. $\alpha_{M/Y'}$). And assume $\pi': C_{M/Y'}\rightarrow C_{M/Y}$ is the map induced by the smooth map $\pi: Y' \rightarrow Y$. 
\begin{lem}\label{lem1}
The following diagram commutes
\begin{equation*}
\begin{CD}
\Omega_\ast(Y)  @>{\sigma_{Y}}>> \Omega_\ast(C_{M/Y})\\
@V{\pi^\ast}VV                           @V{\pi'^\ast}VV\\
\Omega_{\ast +r}(Y')  @>{\sigma_{Y'}}>>\Omega_{\ast +r}(C_{M/Y'}),
\end{CD}
\end{equation*}
where $r$ is the rank of the relative tangent bundle $T_{Y'/Y}$. In particular, we have $ \pi'^\ast \alpha_{M/Y} = \alpha_{M/Y'}$.
\end{lem}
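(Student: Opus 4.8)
The plan is to verify the commutativity of the square by chasing the construction of $\sigma$ through the deformation-to-the-normal-cone diagram \eqref{dfnc}, using naturality of the pieces that enter it with respect to the smooth pullback $\pi$. First I would note that since $\pi: Y' \to Y$ is smooth of relative dimension $r$, the blow-up construction is compatible: letting $W = \mathrm{Bl}_{M\times\{0\}}(Y\times\mathbb{P}^1)$ and $W' = \mathrm{Bl}_{M\times\{0\}}(Y'\times\mathbb{P}^1)$, the map $\pi\times\mathrm{id}_{\mathbb{P}^1}$ lifts to a smooth map $W' \to W$ (smoothness being preserved because blowing up commutes with flat, hence smooth, base change along a center that itself pulls back correctly — here $M\times\{0\}\hookrightarrow Y\times\mathbb{P}^1$ pulls back to $M\times\{0\}\hookrightarrow Y'\times\mathbb{P}^1$ since $i = \pi\circ i'$). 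This restricts to a smooth map $W'^\circ \to W^\circ$ of relative dimension $r$, and on the special fibers it induces exactly the map $\pi': C_{M/Y'} \to C_{M/Y}$, which is itself smooth of relative dimension $r$ (it is the restriction of $T_{Y'/Y}$-torsor data, or more directly: $C_{M/Y'}\to C_{M/Y}$ is smooth since $N_{M/Y'}\to N_{M/Y}$ is, by the exact sequence of the smooth morphism).

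Next I would assemble the diagram \eqref{dfnc} for both $Y$ and $Y'$ and connect them by the pullback maps. The key inputs are: (a) smooth pullback along $W'^\circ \to W^\circ$ commutes with proper pushforward $j_\ast$ along the closed embeddings of the special fibers $C_{M/Y}\hookrightarrow W^\circ$ and $C_{M/Y'}\hookrightarrow W'^\circ$ — this is the base-change/compatibility statement for $l.c.i.$ pullback (here a smooth pullback) and projective pushforward in algebraic cobordism, available from \cite{LM}; (b) smooth pullback commutes with the Gysin/refined pullback $j^\ast$ for the Cartier divisor, again by naturality in \cite{LM}; (c) the homotopy/homogeneity isomorphism $\Omega_k(Y)\cong\Omega_{k+1}(Y\times\mathbb{A}^1)$ is natural with respect to smooth pullback. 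Granting these, both squares and triangles built from the two copies of \eqref{dfnc} commute, and $\sigma$ being defined as the unique factorization through $j^\ast j_\ast = 0$, the induced maps fit into the commuting square claimed in the lemma. The final assertion $\pi'^\ast\alpha_{M/Y} = \alpha_{M/Y'}$ then follows by applying the square to the canonical class: we need $\pi^\ast \mathbf{1}_Y = \mathbf{1}_{Y'}$ in $\Omega_\ast(Y')$ shifted by $r$, which holds because for a smooth morphism $\pi$ the pullback of the fundamental class $[Y\xrightarrow{\mathrm{id}}Y]$ is $[Y'\xrightarrow{\mathrm{id}}Y']$ — this is part of the definition of smooth pullback in \cite{LM}.

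The main obstacle I anticipate is part (a): verifying that smooth pullback along $W'^\circ\to W^\circ$ genuinely commutes with the pushforward $j_\ast$ from the special fibers, because the special fiber $\pi^{-1}(0)$ of $W\to\mathbb{P}^1$ is reducible (it is $\mathrm{Bl}_M(Y)\cup P(C_{M/Y}\oplus\mathcal{O}_M)$), and one must be careful that restricting to $W^\circ$ — which deletes the blow-up component — is compatible with the lift to $W'^\circ$. Concretely, one must check that the preimage under $W'^\circ\to W^\circ$ of $C_{M/Y}\subset W^\circ$ is exactly $C_{M/Y'}\subset W'^\circ$ (not a larger scheme) and that the induced map is the smooth map $\pi'$; this is a local computation with the Rees algebra / blow-up charts, using that $\pi$ smooth implies $\mathcal{O}_{Y'}\otimes_{\mathcal{O}_Y}(I^n/I^{n+1}) \xrightarrow{\sim} (I')^n/(I')^{n+1}$ along $M$. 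Once this geometric compatibility is pinned down, the cobordism-theoretic naturality statements are formal consequences of the axioms and standard results in \cite{LM}, and the rest of the argument is bookkeeping; I would not belabor the routine diagram chase.
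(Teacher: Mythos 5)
Your overall architecture is the same as the paper's: extend $\pi$ to a smooth morphism $\pi_W\colon W_{Y'}^{\circ}\to W_{Y}^{\circ}$ of (open) deformation spaces restricting to $\pi'$ on the fibres over $0$, and then chase the localization diagram (\ref{dfnc}) using that smooth pullback commutes with the proper pushforward $j_\ast$, with the divisor pullback $j^\ast$, and with the homotopy isomorphism $\Omega_k(Y)\cong\Omega_{k+1}(Y\times\mathbb{A}^1)$. The paper does exactly this, by lifting $\xi$ to $\xi'\in\Omega_{\ast+1}(W_Y^{\circ})$ and checking via the Cartesian square over $Y\times\mathbb{A}^1$ that $\pi_W^{\ast}\xi'$ is a lifting of $\pi^{\ast}\xi$; the cobordism-theoretic naturality statements you invoke in (a)--(c) are the correct formal inputs, and the final step $\pi^{\ast}\mathbf{1}_Y=\mathbf{1}_{Y'}$ is right.

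However, both justifications you offer for the key geometric input are false as stated. First, $M\times\{0\}\hookrightarrow Y\times\mathbb{P}^1$ does \emph{not} pull back to $M\times\{0\}$ under $\pi\times\mathrm{id}$: its scheme-theoretic preimage is $(M\times_Y Y')\times\{0\}$, which has relative dimension $r$ over $M\times\{0\}$, so $W_{Y'}$ is not the flat base change of $W_Y$ and ``blow-up commutes with flat base change'' does not produce the lift. Second, the map $\mathcal{O}_{Y'}\otimes_{\mathcal{O}_Y}(I^n/I^{n+1})\to (I')^n/(I')^{n+1}$ is not an isomorphism (already for $Y=\mathrm{pt}$, $Y'=\mathbb{A}^1$, $M=\{0\}$ the left-hand side vanishes for $n\ge 1$); were it one, $C_{M/Y'}\to C_{M/Y}$ would be an isomorphism rather than smooth of relative dimension $r$, contradicting $\dim C_{M/Y'}=\dim Y'$. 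The correct route, which is what the paper implicitly relies on, works directly with the open deformation spaces: the inclusion $I\mathcal{O}_{Y'}\subset I'$ induces a map of extended Rees algebras and hence a morphism $W_{Y'}^{\circ}\to W_{Y}^{\circ}$ over $\mathbb{A}^1$ (no statement about the full blow-ups is needed); \'etale-locally $Y'\cong Y\times\mathbb{A}^r$ with $M$ contained in the zero section, whence $C_{M/Y'}\cong C_{M/Y}\times\mathbb{A}^r$, the smoothness of $\pi_W$ and $\pi'$, and the Cartesian-ness of the square over $0\in\mathbb{A}^1$ all follow. With these corrections your diagram chase goes through and reproduces the paper's proof.
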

\begin{proof}[Proof of Lemma \ref{lem1}]
Suppose $W^{\circ}_{Y}$ (resp. $W^{\circ}_{Y'}$) is the total space of the deformation to the normal cone constructed by the embedding $i$ (resp. $i'$). The map $\pi: Y' \rightarrow Y$ extends to a smooth morphism ${\pi}_{W}: W_{Y'}^{\circ} \rightarrow{W_{Y}^{\circ}}$ inducing the natural map $C_{M/Y'} \rightarrow C_{M/Y}$. Take an element
\[
\xi \in \Omega_\ast(Y) \cong \Omega_{\ast +1}(Y \times \mathbb{A}^1),
\]
and let $\xi' \in \Omega_{\ast+1}(W_{Y}^{\circ})$ be a lifting of $\xi$ (from (\ref{dfnc})). By the Cartesian diagram 
\[
\begin{CD}
Y'\times \mathbb{A}^1@>>>W_{Y'}^\circ  \\
@V{\pi \times \mathrm{id}}VV   @V{\pi_W}VV\\
Y \times \mathbb{A}^1@>>>W_{Y}^\circ 
\end{CD}
\]
the class $\pi_{W}^\ast {\xi}' $ is a lifting of $\pi^\ast \xi$. Hence
\begin{align*}
{\pi'}^{\ast} {\sigma}_{Y}(\xi) & =  {{\pi'}^{\ast}}(\xi' |_{C_{M/Y}})    \\
                              & = (\pi_W^\ast \xi' )|_{C_{M/Y'}}\\
                               & = \sigma_{Y'}(\pi^\ast \xi)
\end{align*}
 where $\xi'|_{C_{M/Y}}$ means the pull back of $\xi'$ by the embedding $C_{M/Y'} \hookrightarrow W_{Y'}^\circ$.
\end{proof}

Now we turn to prove (1) and (2).
\begin{proof}[Proof of (1)]
We have the following Cartesian diagrams:
\[\begin{CD}
T_{Y'}|_M  @>>> C_{M/Y'}\times_ME_0 @>{p_1}>> C_{M/Y'}\\
@V{f}VV @VgVV  @V{\pi'}VV   \\
T_Y|_M @>>>  C_{M/Y}\times_ME_0 @>{p_2}>> C_{M/Y}  \\
@VVV @VVV\\
M @>{0_{E_1}}>> E_1.
\end{CD}
\]
According to the second description of $\sigma_{M/Y}$ above, it suffices to prove
\[
0_{E_1}^!p_1^\ast \alpha_{M/Y'} = f^! 0_{E_1}^! p_2^\ast \alpha_{M/Y}.
\]
In fact, by Lemma \ref{lem1} and intersection theory of algebraic cobordism (Chapter 6 of \cite{LM}), we have
\begin{align*}
0_{E_1}^!p_1^\ast \alpha_{M/Y'} & = 0_{E_1}^!p_1^\ast \pi'^\ast \alpha_{M/Y} \\
                                                    & = 0_{E_1}^!g^\ast p_2^\ast \alpha_{M/Y} \\
                                                    & = f^\ast 0_{E_1}^! p_2^\ast \alpha_{M/Y}. \qedhere
\end{align*}
\end{proof}

\begin{proof} [Proof of (2)]
Consider the following Cartesian diagrams:
\[
\begin{CD}
T_{Y'}|_M @>>>  C_{M/Y'}\times_ME_0 @>{j'}>> C_{M/Y'}\times_M G_0 \\
@VVV  @VVV @VVV\\
M @>{0_{E_1}}>> E_1 @>{j}>> G_1.
\end{CD}
\]
Here both $0_{E_1}$ and $j$ are local complete intersection (l.c.i.) morphisms, therefore $(j\circ0_{E_1})^!=0_{E_1}^! j^!$ (Theorem 6.6.6 of \cite{LM}). 

Assume the projections $C_{M/Y'}\times_ME_0 \rightarrow C_{M/Y'}$ and $C_{M/Y'}\times_MG_0 \rightarrow C_{M/Y'}$ are denoted by $p_E$ and $p_G$. Then it suffices to prove
\[
j^!p_G^\ast \alpha_{M/Y'} = p_E^\ast \alpha_{M/Y'}.
\]
It holds since 
\[
j^!p_G^\ast \alpha_{M/Y'} = j'^!p_G^\ast \alpha_{M/Y'} = p_E^\ast \alpha_{M/Y'}. \qedhere
\]
\end{proof}

\begin{definition}\label{skip}
With the same notation as above, we define
\[
[M]_{\Omega_\ast}^{\mathrm{vir}} := \sigma_{M/Y}({\bf 1}_Y) \in \Omega_{\mathrm{vir.dim}}(M)
\] to be the virtual cobordism class of $M$ with respect to the perfect obstruction theory $E^\bullet \rightarrow L_M^\bullet$.
\end{definition}

\begin{remark}
By definition, the virtual cobordism class we construct yields the virtual fundamental class in Chow theory and the virtual structure sheaf in $K$-theory by the universality of the functor $\Omega_\ast$.
\end{remark}

\section{Twisted Chow theory and Chern numbers}
\subsection{Todd classes and twisted Chow theory}
In order to obtain the formulas for Chern numbers of ${\pi_M}_\ast {[M]_{\Omega_\ast}^{\mathrm{vir}}}$ (Theorem \ref{thmchernnumber}), we introduce here twisted Chow theory. We briefly review the construction. More details can be found in 4.1 and 7.4 of \cite{LM}.

Let $\mathbb{Z}[{\bf t}]:= \mathbb{Z}[t_1,t_2, \dots , t_n,\dots]$ be the graded ring of polynomials with integral coefficients in the variables $t_i (i>0)$ of degree $i$. Starting with Chow theory $A_\ast$, we consider the oriented Borel--Moore homology theory $S\mapsto A_\ast(S) \otimes_\mathbb{Z}\mathbb{Z}[{\bf t}]$ by extension of scalars. 

Define the universal inverse Todd class operator of a line bundle $L \rightarrow S$ to be the operator on $A_\ast(S)[{\bf t}]$ given by 
\[\mathrm{Td}_{\bf t}^{-1}(L) = \sum_{i=0}^{\infty}c_1(L)^it_i : A_\ast(S)[{\bf t}] \rightarrow A_\ast(S)[{\bf t}].
\]
Note that by the definition of the first Chern class operator, there are only finitely many non-zero terms in this power series. We can extend such operators to all vector bundles, $i.e.$ there is a homomorphism
\[\mathrm{Td}_{\bf t}^{-1}: K^0(S) \rightarrow \mathrm{Aut}(A_\ast(S)[{\bf t}]).
\]
Let $E$ be any vector bundle over $S$, then $\mathrm{Td}_{\bf t}^{-1}(E)$ can be expanded as
\begin{equation}\label{CF}
\mathrm{Td}_{\bf t}^{-1}(E)=\sum_{I = (n_1,\cdots, n_d,\cdots)}c_I(E)t_1^{n_1}\dots t_d^{n_d}\dots.
\end{equation}
The $c_I(E)$ are the Conner--Floyed Chern class operators. By the splitting principle, it can be checked easily that the $\mathbb{Q}$-vector space generated by $c_I(E)$ $ (|I|=\Sigma kn_k = m)$ is same as the $\mathbb{Q}$-vector space generated by $\prod_{I=(n_1,\dots, n_d,\dots)}c_i(E)^{n_i}$ ($|I|=m$). 

Now we can twist the oriented Borel--Moore homology theory $A_\ast[{\bf t}]$ by the universal inverse Todd class operator to get a new theory $A_\ast[{\bf t}]^{({\bf t})}$. The construction is the following:
\begin{enumerate}
\item $A_\ast(S)[{\bf t}]^{(\bf t)}:= A_\ast(S)[{\bf t}]$ for any scheme $S$.
\item $f_\ast^{(\bf t)} = f_\ast$.
\item For any l.c.i. morphism $f:S_1 \rightarrow S_2$, choose a factorization of $f$ as $f=q\circ i$ with $i:S_1\rightarrow P$ a regular embedding and $q:P\rightarrow S_2$ a smooth morphism. The virtual normal bundle $N_f$ is defined to be the $K$-theory class $[N_i]-[i^\ast T_q]$ which is independent of the choice of the factorization. \footnote{$N_i$ is the normal bundle of $i$ and $T_p$ is the relative tangent bundle of $p$.} Then
\[f_{(\bf t)}^\ast = \mathrm{Td}_{\bf t}^{-1}([N_f])\circ f^\ast.
\]
\end{enumerate}
The new theory $A_\ast [{\bf t}]^{(\bf t)}$ is still an oriented Borel--Moore homology theory. The following theorem linking algebraic cobordism theory $\Omega_\ast$ and twisted Chow theory $A_\ast [{\bf t}]^{(\bf t)}$ is crucial.

\begin{thm}[{\cite{LM}}] \label{LM}
There is a canonical morphism
\[\vartheta : \Omega_\ast \rightarrow A_\ast[{\bf t}]^{({\bf t})}
\]
which induces an isomorphism after tensoring with $\mathbb{Q}$.
\end{thm}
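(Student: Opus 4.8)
The plan is to extract $\vartheta$ from the universal property of algebraic cobordism and then prove it is a rational isomorphism by reducing, via the structure theory of \cite{LM}, to a computation with formal group laws over a point.

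First I would invoke universality. The functor $A_\ast[{\bf t}]^{({\bf t})}$ has been exhibited as an oriented Borel--Moore homology theory — one must check that prefixing the operator $\mathrm{Td}_{\bf t}^{-1}([N_f])$ to the l.c.i.\ pullbacks is compatible with composition of l.c.i.\ morphisms and respects the projection formula, the extended homotopy property and the localization sequence — so the universal property of $\Omega_\ast$ furnishes a unique morphism of oriented BM homology theories $\vartheta\colon \Omega_\ast \to A_\ast[{\bf t}]^{({\bf t})}$. As a morphism of such theories, $\vartheta$ carries the cobordism fundamental class ${\bf 1}_Y$ of a smooth $Y$ to the fundamental class of $Y$ in the target, is linear over $\Omega_\ast(\mathrm{pt}) = \mathbb{L}$ (acting on $A_\ast[{\bf t}]^{({\bf t})}$ through $\vartheta_{\mathrm{pt}}$), and, because it commutes with first Chern class operators, intertwines the formal group laws of the two theories.

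The next step is to compute $\vartheta_{\mathrm{pt}}\colon \mathbb{L} \to A_\ast(\mathrm{pt})[{\bf t}] = \mathbb{Z}[{\bf t}]$: it is precisely the map classifying the formal group law of the twisted theory. Since Chow theory has the additive law, twisting by the inverse Todd operator replaces $c_1(L)$ by $g(c_1(L))$, where $g(x) = \sum_{i\ge 0} t_i x^{i+1}$ with the convention $t_0 = 1$; hence the twisted law is $F_{\bf t}(u,v) = g\bigl(g^{-1}(u) + g^{-1}(v)\bigr)$, with logarithm $g^{-1}$. After $\otimes\,\mathbb{Q}$, Lazard's theorem together with Mishchenko's formula identifies $\mathbb{L}\otimes\mathbb{Q}$ with the polynomial ring $\mathbb{Q}[m_1, m_2, \dots]$, where $m_n$ is the coefficient of $u^{n+1}$ in the logarithm of the universal law; and $\vartheta_{\mathrm{pt}}\otimes\mathbb{Q}$ sends $m_n$ to the coefficient of $u^{n+1}$ in $g^{-1}$, which by Lagrange inversion is $-t_n$ plus a polynomial in $t_1, \dots, t_{n-1}$. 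So $\vartheta_{\mathrm{pt}}\otimes\mathbb{Q}$ takes a system of polynomial generators to polynomial generators modulo decomposables and is an isomorphism of graded $\mathbb{Q}$-algebras.

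Finally I would propagate this from the point to arbitrary $X$. Surjectivity of $\vartheta_X$ is formal: $A_\ast[{\bf t}]^{({\bf t})}(X) = CH_\ast(X)[{\bf t}]$ is spanned over $\mathbb{Z}[{\bf t}]$ by the fundamental classes $f_\ast({\bf 1}_Y)$ of projective morphisms from smooth varieties (obtained by resolving and compactifying cycles), and such classes lie in the image of $\vartheta$. Rational injectivity is the serious point: by the structure results of \cite{LM} the twisted theory is a \emph{free} oriented BM homology theory, namely the base change of $\Omega_\ast$ along $\vartheta_{\mathrm{pt}}$, so after $\otimes\,\mathbb{Q}$ the base change is along an isomorphism and $\vartheta\otimes\mathbb{Q}$ becomes an isomorphism on every $X$. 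The hard part is exactly this last input — that $\Omega_\ast$ is, rationally, freely generated over $\mathbb{L}$ by classes of smooth projective varieties with all relations forced by the formal group law, so that forming the twisted theory commutes with base change. This is not something one proves from the axioms; it is the substance of the construction of algebraic cobordism in \cite{LM}, and in a write-up one would simply cite it (as the paper does).
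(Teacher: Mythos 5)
The paper offers no proof of Theorem \ref{LM}: it is imported wholesale from Levine--Morel (the twisting construction and the comparison with $\Omega_\ast$ in Sections 4.1 and 7.4 of \cite{LM}), so the only thing to measure your argument against is the source itself, and your sketch is a faithful outline of exactly that argument. The universality step, the identification of the twisted formal group law with logarithm $g^{-1}$ for $g(x)=\sum_{i\ge 0}t_ix^{i+1}$, the Lagrange-inversion computation showing $\vartheta_{\mathrm{pt}}\otimes\mathbb{Q}$ sends Mishchenko's generators of the rational Lazard ring to $-t_n$ plus decomposables, and the reduction of the general case to the point via the free-theory/base-change structure of $\Omega_\ast$ are all as in \cite{LM}, and you have correctly isolated that last step as the substantive input to be cited rather than reproved.
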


\subsection{Virtual class in $A_\ast (M) [{\bf t}]_\mathbb{Q}^{({\bf t})}$   }
We turn back to the virtual class and keep the same notation as in Section 1. By Theorem \ref{LM}, we identify $\Omega_\ast (S)_{\mathbb{Q}}$ with $A_\ast (S) [{\bf t}]_\mathbb{Q}^{({\bf t})}$ for any scheme $S$. 

If $M$ is a nonsingular (or l.c.i.) scheme, we have the following formula for the fundamental class under the isomorphism between  $\Omega_\ast (M)_{\mathbb{Q}}$ and $A_\ast (M) [{\bf t}]_\mathbb{Q}^{({\bf t})}$
\begin{equation} \label{formula1}
{\bf 1}_M = \mathrm{Td}_{\bf t}^{-1}(-[T_M])[M].
\end{equation}
It is obtained immediately by pulling back the fundamental class ${\bf 1}_{ \mathrm{pt}}$ along the projection $M \rightarrow \mathrm{pt}$. We have a more general formula which can be regarded as the virtual version of (\ref{formula1}).

\begin{thm} \label{relation}
Let $M$ be a scheme, and $E^\bullet= [E^{-1} \rightarrow E^0] \rightarrow L_M^\bullet$ be a perfect obstruction theory. Then under the isomorphism between $\Omega_\ast \otimes \mathbb{Q}$ and $A_\ast[\bf t]_\mathbb{Q}^{({\bf t})}$, the following holds:
\begin{equation*}
[M]_{\Omega_\ast}^{\mathrm{vir}} = \mathrm{Td}_{\bf t}^{-1}(-[T_M^{\mathrm{vir}}])[M]^{\mathrm{vir}}.
\end{equation*}
Here $[M]^{\mathrm{vir}} \in A_{\mathrm{vir.dim}}(M)$ is the Chow virtual class obtained by the same perfect obstruction theory.
\end{thm}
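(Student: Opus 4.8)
The plan is to push the whole chain of maps defining $\sigma_{M/Y}$ through the morphism $\vartheta\colon\Omega_\ast\to A_\ast[\mathbf{t}]^{(\mathbf{t})}$ and to keep track of the universal inverse Todd operators that appear. Since $\vartheta$ is a morphism of oriented Borel--Moore homology theories, it carries each operation used in the construction --- the deformation-to-the-normal-cone map $\sigma$, the homotopy-invariance isomorphism for the vector bundle $C_{M/Y}\times_M E_0\to C_{M/Y}$, the homotopy-invariance isomorphism for the $T_Y|_M$-torsor $C_{M/Y}\times_M E_0\to D^{\mathrm{vir}}$, the pushforward $e_\ast$, and the Gysin map $0_{E_1}^!$ --- to its counterpart in the twisted theory $A_\ast[\mathbf{t}]^{(\mathbf{t})}$. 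On the underlying groups $A_\ast(-)[\mathbf{t}]$ these counterparts are, by the very definition of the twist, the corresponding \emph{untwisted} Chow operations precomposed with explicit Todd operators: a proper pushforward is unchanged, a smooth pullback (in particular a vector-bundle or affine-bundle projection) along a morphism with relative tangent bundle $V$ acquires the factor $\mathrm{Td}_{\mathbf{t}}^{-1}(-[V])$, and a regular embedding or refined Gysin map with normal bundle $N$ acquires the factor $\mathrm{Td}_{\mathbf{t}}^{-1}([N])$. The starting input is $\vartheta(\mathbf 1_Y)=\mathrm{Td}_{\mathbf{t}}^{-1}(-[T_Y])[Y]$, which is formula (\ref{formula1}).

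Running through the chain, I would first observe that $\sigma$ is assembled from the pullback isomorphism along the trivial-tangent projection $Y\times\mathbb A^1\to Y$, the pushforward from the Cartier divisor $C_{M/Y}\subset W^{\circ}$, and restriction to that divisor; as $C_{M/Y}$ is the fibre of $W^{\circ}\to\mathbb A^1$ over $0$, its normal bundle in $W^{\circ}$ is trivial, so $\sigma$ acquires no twist and on the underlying groups it is Fulton's specialization, with $\sigma([Y])=[C_{M/Y}]$. Since $C_{M/Y}$ maps to $Y$ through $M$ (via $W^{\circ}\to Y\times\mathbb A^1\to Y$), the pullback of $T_Y$ to $C_{M/Y}$ is the pullback of $T_Y|_M$ from $M$, and specialization commutes with its Chern class operators, so after $\sigma$ the class is $\mathrm{Td}_{\mathbf{t}}^{-1}(-[T_Y|_M])[C_{M/Y}]$. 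The bundle homotopy isomorphism then contributes $\mathrm{Td}_{\mathbf{t}}^{-1}(-[E_0])$ and turns the Chow part into $[C_{M/Y}\times_M E_0]$; the isomorphism onto $D^{\mathrm{vir}}$, being the \emph{inverse} of the homotopy-invariance pullback along the $T_Y|_M$-torsor, contributes $\mathrm{Td}_{\mathbf{t}}^{-1}(-[T_Y|_M])^{-1}=\mathrm{Td}_{\mathbf{t}}^{-1}([T_Y|_M])$ and turns the Chow part into $[D^{\mathrm{vir}}]$; $e_\ast$ contributes nothing; and $0_{E_1}^!$, the Gysin map for the zero section $M\hookrightarrow E_1$ with normal bundle $E_1$, contributes $\mathrm{Td}_{\mathbf{t}}^{-1}([E_1])$ while the Chow part becomes $0_{E_1}^![D^{\mathrm{vir}}]=[M]^{\mathrm{vir}}$.

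Finally, all the bundles appearing above ($T_Y|_M$, $E_0$, $E_1$) are pulled back from $M$, so by the projection formula for each operation the accumulated characteristic class slides onto $M$, and multiplicativity of $\mathrm{Td}_{\mathbf{t}}^{-1}\colon K^0(M)\to\mathrm{Aut}(A_\ast(M)[\mathbf{t}])$ gives the total twist
\begin{align*}
\mathrm{Td}_{\mathbf{t}}^{-1}(-[T_Y|_M])\cdot\mathrm{Td}_{\mathbf{t}}^{-1}(-[E_0])\cdot\mathrm{Td}_{\mathbf{t}}^{-1}([T_Y|_M])\cdot\mathrm{Td}_{\mathbf{t}}^{-1}([E_1]) &= \mathrm{Td}_{\mathbf{t}}^{-1}([E_1]-[E_0]) \\
&= \mathrm{Td}_{\mathbf{t}}^{-1}(-[T_M^{\mathrm{vir}}]),
\end{align*}
so that $\vartheta([M]_{\Omega_\ast}^{\mathrm{vir}})=\mathrm{Td}_{\mathbf{t}}^{-1}(-[T_M^{\mathrm{vir}}])[M]^{\mathrm{vir}}$, which is the assertion. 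I expect the main obstacle to be bookkeeping rather than any deep input: one must track the direction of each isomorphism --- especially that the $D^{\mathrm{vir}}$-isomorphism enters inverted, producing $+[T_Y|_M]$, which is exactly what cancels the $-[T_Y|_M]$ coming out of $\sigma$ --- and one must invoke the correct projection formula to justify that each operation commutes with the relevant Chern class operator. These are standard properties of the operations in \cite{LM} and \cite{fulton}, together with the triviality of the normal bundle of $C_{M/Y}$ in $W^{\circ}$; assembling them carefully is what makes the cancellation rigorous. (Note that, as remarked after Theorem \ref{thm1}, this statement also gives the independence of $[M]_{\Omega_\ast}^{\mathrm{vir}}$ with $\mathbb{Q}$-coefficients, since the right-hand side manifestly depends only on the perfect obstruction theory.)
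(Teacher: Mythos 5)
Your proposal is correct and follows essentially the same route as the paper: the paper's proof is precisely a five-step chase of the chain defining $\sigma_{M/Y}$ under the identification with $A_\ast[{\bf t}]^{({\bf t})}_{\mathbb Q}$, producing the factors $\mathrm{Td}_{\bf t}^{-1}(-[T_Y|_M])$ from the specialization map (via the explicit lift $\mathrm{Td}_{\bf t}^{-1}(-[\pi_W^\ast T_Y])[W^\circ]$ on $W^\circ$, which is the same observation you make about the triviality of the normal bundle of $C_{M/Y}$), $\mathrm{Td}_{\bf t}^{-1}(-[E_0])$ from the bundle pullback, $\mathrm{Td}_{\bf t}^{-1}([T_Y|_M])$ from inverting the torsor isomorphism onto $D^{\mathrm{vir}}$, nothing from $e_\ast$, and $\mathrm{Td}_{\bf t}^{-1}([E_1])$ from $0^!_{E_1}$, with the same cancellation yielding $\mathrm{Td}_{\bf t}^{-1}([E_1]-[E_0])[M]^{\mathrm{vir}}$. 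The bookkeeping and sign conventions you describe match the paper's steps exactly.
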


\begin{remark}\label{easy}
It is seen easily that the cobordism virtual class in $\Omega_\ast \otimes \mathbb{Q}$ does not rely on the choice of embedding and representative of the perfect obstruction theory from Theorem \ref{relation}, since the Chow virtual class does not.
\end{remark}

The strategy of the proof is to chase the chain of maps from which we defined the virtual cobordism class:
\begin{align*}   
\Omega_\ast(Y)_{\mathbb{Q}}  \xrightarrow{~~~\sigma~~~}  \Omega_{\ast}(C_{M/Y})_{\mathbb{Q}}\xrightarrow{~~\simeq~~}   \Omega_{\ast + \mathrm{rk}(E_0)}(C_{M/Y}\times_ME_0)_{\mathbb{Q}}  \simeq  \Omega_{\ast+\mathrm{rk}(E_0)-\mathrm{dim}(Y)}(D^{\mathrm{vir}})_{\mathbb{Q}}\\
\xrightarrow{~~\phantom{\sigma}~~} \Omega_{\ast +\mathrm{rk}(E_0)-\mathrm{dim}(Y) }(E_1)_{\mathbb{Q}} \xrightarrow{~~0_{E_1}^!~~} \Omega_{\ast+\mathrm{rk}(E_0)-\mathrm{rk}(E_1)-\mathrm{dim}(Y)}(M)_{\mathbb{Q}}.
\end{align*}
The proof is devided by 5 steps as follows.

\bigskip
\noindent\textit{Step 1.} Let $\pi_1:C_{M/Y}\rightarrow M$ be the projection. We claim that 
\[\alpha_{M/Y}= \mathrm{Td}_{\bf t}^{-1}(-[\pi_1^\ast T_Y|_M])[C_{M/Y}] \in \Omega_\ast(C_{M/Y})_\mathbb{Q}(= A_\ast(C_{M/Y})[{\bf t}]^{({\bf t})}_\mathbb{Q}).
\]

\begin{proof}
Consider  \[\pi_W: W=\mathrm{Bl}_{M\times \{0\}}(Y\times \mathbb{P}^1) \rightarrow Y.
\]
We restrict the vector bundle $\pi_W^\ast T_Y$ on $W$ to the open subset $W^\circ = W \setminus \mathrm{Bl}_M(Y)$. For convenience, this vector bundle is also denoted by $\pi_W^\ast T_Y$. Now we have the following diagram
\[\begin{CD}
\Omega_{\mathrm{dim}(Y)+1}(C_{M/Y})_\mathbb{Q} @>{j_\ast}>> \Omega_{\mathrm{dim}(Y)+1}(W^{\circ})_\mathbb{Q} @>{u}>> \Omega_{\mathrm{dim}(Y)+1}(Y \times \mathbb{A}^1)_{\mathbb{Q}} @>>> 0 \\
@.                                                    @V{j^{\ast}}VV                  @A{\mathrm{pr}^{\ast}}AA\\
                                 @.                   \Omega_{\mathrm{dim}(Y)}(C_{M/Y})_\mathbb{Q}                 @<{\sigma}<<  \Omega_{\mathrm{dim}(Y)}(Y)_\mathbb{Q}.
\end{CD}
\]
By (\ref{formula1}), we have ${\bf 1}_Y = \mathrm{Td}_{\bf t}^{-1}(-[T_Y])[Y] \in \Omega_{\mathrm{dim}(Y)}(Y)_\mathbb{Q}$. Then
\[ {\bf 1}_{Y\times \mathbb{A}^1} = \mathrm{pr}^\ast{\bf 1_Y} = \mathrm{Td}_{\bf t}^{-1}(-[\mathrm{pr}^\ast T_Y])[Y\times \mathbb{A}^1].
\]
Consider the class
\[\theta := \mathrm{Td}_{\bf t}^{-1}(-[\pi_W^\ast T_Y])[W^\circ] \in \Omega_{\mathrm{dim}(Y)+1}(W^\circ)_\mathbb{Q}.
\]It is clear that $u(\theta)= {\bf 1}_{Y\times \mathbb{A}^1}$. Hence
\[  \alpha_{M/Y}= \sigma({\bf 1}_Y) = j^\ast\theta = \mathrm{Td}_{\bf t}^{-1}(-[(\pi_W^\ast T_Y)|_{C_{M/Y}}])[C_{M/Y}],
\] and obviously $(\pi_W^\ast T_Y)|_{C_{M/Y}} = \pi_1^\ast (T_Y|_M)$.
\end{proof}
\noindent \textit{Step 2.} Let $\pi_2: C_{M/Y}\times_ME_0 \rightarrow C_{M/Y}$ be the projection. We have\[
\beta( := \pi_2^\ast \alpha_{M/Y} )= \mathrm{Td}_{\bf t}^{-1}(-[T_Y]-[E_0])[C_{M/Y}\times_ME_0].
\] (Note that we still use $T_Y$ and $E_0$ to represent the corresponding bundles pulled back from $M$ for notational convenience, and we will use such convention in Steps 3,4,5 below.)
\begin{proof}
It follows from the definition of pull back in $A_\ast[{\bf t}]^{({\bf t})}_\mathbb{Q}$, since $[N_{\pi_2}] = -[T_{\pi_2}] = -[E_0] \in K^0(C_{M/Y}\times E_0)$.
\end{proof}

\noindent \textit{Step 3.} Let $\pi_3:C_{M/Y}\times_ME_0\rightarrow D^{\mathrm{vir}}$ be the quotient map. Since \[\pi_3^\ast:\Omega_\ast (D^{\mathrm{vir}}) \rightarrow \Omega_{\ast + \mathrm{dim}(Y)}(C_{M/Y}\times_M E_0)\] induces an isomorphism, there exists $\gamma \in \Omega_{\mathrm{rk}(E_0)}(D^{\mathrm{vir}})$ such that $\pi_3^\ast\gamma = \beta$. We claim that\[
\gamma = \mathrm{Td}_{\bf t}^{-1}(-[E_0])[D^{\mathrm{vir}}] \in \Omega_{\mathrm{rk}(E_0)}(D^{\mathrm{vir}})_\mathbb{Q}.
\]
\begin{proof}
Since $\pi_3^\ast$ induces an isomorphism, it suffices to prove \[\pi_3^\ast(\mathrm{Td}_{\bf t}^{-1}(-[E_0])[D^{\mathrm{vir}}]) = \beta.\]
In fact, we have $[N_{\pi_3}] = -[T_Y] \in K^0(C_{M/Y}\times_ME_0)$ by the definition of $D^{\textup{vir}}$. Therefore
\[
\pi_3^\ast(\mathrm{Td}_{\bf t}^{-1}(-[E_0])[D^{\mathrm{vir}}])=\mathrm{Td}_{\bf t}^{-1}(-[T_Y])\mathrm{Td}_{\bf t}^{-1}(-[E_0])\pi_3^{\ast}[D^{\mathrm{vir}}] = \beta.\qedhere\]
\end{proof}

\noindent \textit{Step 4.} Pushing forward along the embedding $i:D^{\mathrm{vir}} \rightarrow E_1$, we get
\[\gamma'(:=i_\ast\gamma) = \mathrm{Td}_{\bf t}^{-1}(-[E_0])[D^{\mathrm{vir}}] \in \Omega_{\mathrm{rk}(E_0)}(E_1).
\]

\noindent \textit{Step 5.} We finish the proof of Theorem \ref{relation}, $i.e.$,
\[[M]_{\Omega_\ast}^{\mathrm{vir}} = \mathrm{Td}_{\bf t}^{-1}([E_1]-[E_0])[M]^{\mathrm{vir}} \in \Omega_{\mathrm{rk}(E_0)-\mathrm{rk}(E_1)}(M)_\mathbb{Q}.
\]
\begin{proof}
Let $\pi_4: E_1 \rightarrow M$ be the projection. It suffices to prove that\[
\pi_4^\ast(\mathrm{Td}_{\bf t}^{-1}([E_1]-[E_0])[M]^{\mathrm{vir}}) = \gamma'.
\]
Actually we have 
\begin{align*}
\pi_4^\ast(\mathrm{Td}_{\bf t}^{-1}([E_1]-[E_0])[M]^{\mathrm{vir}}) & = \mathrm{Td}_{\bf t}^{-1}(-[E_1])\mathrm{Td}_{\bf t}^{-1}([E_1]-[E_0])\pi_4^\ast[M]^{\mathrm{vir}}\\
& = \mathrm{Td}_{\bf t}^{-1}(-[E_0])[D^{\mathrm{vir}}].\qedhere
\end{align*}
\end{proof}

\subsection{Chern numbers}
The cobordism group $\Omega_\ast(\mathrm{pt})$ has the structure of a graded ring. The morphism in Theorem \ref{LM} induces an isomorphism between the rings $\Omega_\ast(\mathrm{pt})_\mathbb{Q}$ and $\mathbb{Q}[{\bf t}]$. Hence every class can be expressed in terms of homogeneous polynomials in the variables $t_1,\dots , t_k, \dots$.

If $M$ is a nonsingular projective scheme, we consider the expression of $[M \rightarrow \mathrm{pt}]$ as homogeneous polynomials. By pushing forward the fundamental class of $M$ to one point and formula~(\ref{formula1}), we get
\begin{equation} \label{formula}
[M\rightarrow \mathrm{pt}]= \sum_{\sum_{k}{kn_k}=\mathrm{dim}(M)}\big{(}{\int_{[M]}c_I(-[T_M])}\big{)}{\bf t}^I,
\end{equation}
where $I=(n_1,\dots, n_d, \dots)$ and ${\bf t}^I=t_1^{n_1}\dots t_d^{n_d}\dots$. (recall that $c_I$ are the Conner--Floyed Chern class operators. See (\ref{CF}).)
Actually, formula (\ref{formula}) explains the reason that cobordism classes over one point are governed by Chern numbers.

In general, when $M$ is a projective scheme carrying a perfect obstruction theory, similarly we push forward the virtual cobordism class along the projection $\pi_M: M \rightarrow \mathrm{pt}$. By Theorem \ref{relation}, we have
\[
{\pi_M}_\ast{[M]_{\Omega_\ast}^{\mathrm{vir}}}= \sum_{\sum_{k}{kn_k}=\mathrm{vir.dim}}\big{(}{\int_{[M]^{\mathrm{vir}}}c_I(-[T_M^{\mathrm{vir}}])}\big{)}{\bf t}^I,
\]
where $I=(n_1,\dots, n_d, \dots)$ and ${\bf t}^I=t_1^{n_1}\dots t_d^{n_d}\dots$. This gives the proof of Theorem \ref{thmchernnumber}.

\section{Stable pairs: descendents, generalized descendents and Chern characters}
We use the same notation as in Section 0.3 and 0.4. For notational convenience, sometimes we just write $P(X)$ to denote the moduli space $P_n(X, \beta)$.

\subsection{Descendents and generalized descendents}
Similar to the descendents defined in Section 0.3, we define the generalized descendents $\tau_{i,j}(\gamma)$ as the following operators
\[{\pi_P}_\ast\big{(}\pi_X^\ast(\gamma)\cdot \mathrm{ch}_{2+i}(\mathbb{F})\cdot \mathrm{ch}_{2+j}(\mathbb{F})\cap \pi_P^\ast(~\cdot~)\big{)}: H_\ast(P(X), \mathbb{Q}) \rightarrow H_\ast(P(X),\mathbb{Q}).
\]
Since $\pi_P: P(X)\times X \rightarrow P(X)$ is an l.c.i and proper morphism, we have the Gysin homomorphism 
\begin{equation*}
{\pi_P}_\ast: H^n(P(X) \times X) \rightarrow H^{n-6}(P(X)).
\end{equation*} (Actually in our case, the push forward is just given by the K{\"u}nneth decomposition.) Hence descendents and generalized descendents lie in $H^{\ast}(P(X))$, $i.e.$,
\begin{equation*}
\tau_{i}(\gamma)= {\pi_P}_\ast(\pi_X^\ast\gamma \cdot \mathrm{ch}_{2+i}(\mathbb{F})) \in H^\ast(P(X)),
\end{equation*}
\begin{equation*}
\tau_{i,j}(\gamma)= {\pi_P}_\ast(\pi_X^\ast\gamma \cdot \mathrm{ch}_{2+i}(\mathbb{F})\cdot \mathrm{ch}_{2+j}(\mathbb{F})) \in H^\ast(P(X)).
\end{equation*}

We show that generalized descendents can be expressed in terms of descendents.

Consider the diagonal embedding $\delta: X \rightarrow X \times X$. For $\gamma \in H^\ast(X)$, the class $\delta_\ast\gamma$ has K{\"u}nneth decomposition $\sum_{i}{u_i \otimes v_i}$ where $u_i, v_i \in H^\ast(X)$. In other words, let $q_i: X\times X \rightarrow X$ be the projection onto the $i$-th factor (i=1,2), then $\delta_\ast\gamma = \sum_{i}q_1^\ast u_i \cdot q_2^\ast v_i$.

We look at the space $P(X) \times X\times X$. From the two projections $p_i: P(X)\times X\times X \rightarrow P(X)\times X$ (i=1,2), we obtain two sheaves $\mathbb{F}_1$, $\mathbb{F}_2$ on $P(X)\times X\times X$ by pulling back the universal sheaf $\mathbb{F}$ via $p_i$. We have the following commutative diagram:
\begin{equation}
\begin{CD}
P(X)\times X @>{\Delta}>> P(X)\times X \times X @>\pi>> P(X)\\
@V{\pi_X}VV    @V\pi_{X\times X}VV\\
X @>{\delta}>> X\times X.
\end{CD}
\end{equation}

\begin{lem} \label{lem2}
The following formula holds
\[
\tau_{i, j}(\gamma) = {\pi}_\ast\big{(}\pi_{X\times X}^\ast(\delta_\ast\gamma)\cdot \mathrm{ch}_{2+i}(\mathbb{F}_1)\cdot \mathrm{ch}_{2+j}(\mathbb{F}_2)\big{)}
\]
where $\gamma \in H^\ast(X)$.
\end{lem}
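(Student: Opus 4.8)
The plan is a short diagram chase based on base change along the Cartesian square above, together with the projection formula. First I would observe that the square
\[
\begin{CD}
P(X)\times X @>{\Delta}>> P(X)\times X\times X \\
@V{\pi_X}VV @VV{\pi_{X\times X}}V \\
X @>{\delta}>> X\times X
\end{CD}
\]
is Cartesian: the fiber product of $\pi_{X\times X}$ and $\delta$ is the locus of triples $(p,x_1,x_2)$ in $P(X)\times X\times X$ with $x_1=x_2$, which is exactly the image of $\Delta$. Since $\pi_{X\times X}$ is a projection, hence flat, the square is Tor-independent, so we obtain the base change identity
\[
\pi_{X\times X}^\ast(\delta_\ast\gamma)=\Delta_\ast(\pi_X^\ast\gamma)
\]
in $H^\ast(P(X)\times X\times X,\mathbb{Q})$; here $\delta_\ast$ and $\Delta_\ast$ are the Gysin homomorphisms for these proper l.c.i.\ embeddings (of codimension $3$), which make sense even when $P(X)$ is singular because $\Delta$ is a regular embedding. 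Alternatively one may check this identity componentwise using the K\"unneth decomposition of $H^\ast(P(X)\times X\times X)$.

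Next I would substitute this into the right-hand side of the claimed formula and apply the projection formula for $\Delta$. The key observation is that $p_i\circ\Delta=\mathrm{id}_{P(X)\times X}$, whence $\Delta^\ast\mathbb{F}_i=\Delta^\ast p_i^\ast\mathbb{F}=\mathbb{F}$ and therefore $\Delta^\ast\mathrm{ch}_{2+i}(\mathbb{F}_i)=\mathrm{ch}_{2+i}(\mathbb{F})$. Combining the two, the projection formula gives
\[
\pi_{X\times X}^\ast(\delta_\ast\gamma)\cdot\mathrm{ch}_{2+i}(\mathbb{F}_1)\cdot\mathrm{ch}_{2+j}(\mathbb{F}_2)=\Delta_\ast\big(\pi_X^\ast\gamma\cdot\mathrm{ch}_{2+i}(\mathbb{F})\cdot\mathrm{ch}_{2+j}(\mathbb{F})\big).
\]

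Finally I would push forward along $\pi$ and use functoriality of proper pushforward together with $\pi\circ\Delta=\pi_P$, obtaining
\[
\pi_\ast\big(\pi_{X\times X}^\ast(\delta_\ast\gamma)\cdot\mathrm{ch}_{2+i}(\mathbb{F}_1)\cdot\mathrm{ch}_{2+j}(\mathbb{F}_2)\big)={\pi_P}_\ast\big(\pi_X^\ast\gamma\cdot\mathrm{ch}_{2+i}(\mathbb{F})\cdot\mathrm{ch}_{2+j}(\mathbb{F})\big),
\]
and the right-hand side is $\tau_{i,j}(\gamma)$ by definition. The whole argument is formal; the only place that asks for a moment of care is the base change identity in the first step, because $P(X)$ is in general singular, but the flatness of $\pi_{X\times X}$ (equivalently, the K\"unneth picture) makes it routine, and no input about the moduli problem is used beyond properness of the projections and smoothness of $X$.
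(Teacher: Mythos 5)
Your proposal is correct and is essentially the paper's own proof run in the opposite direction: the paper starts from the definition of $\tau_{i,j}(\gamma)$, rewrites $\pi_P=\pi\circ\Delta$ and $\mathrm{ch}(\mathbb{F})=\Delta^\ast\mathrm{ch}(\mathbb{F}_i)$, applies the projection formula for $\Delta$, and then uses the same base change identity $\Delta_\ast\pi_X^\ast\gamma=\pi_{X\times X}^\ast(\delta_\ast\gamma)$ that you establish first. Your extra care in justifying that identity (flatness of the projection, K\"unneth) is a point the paper leaves implicit, but the argument is the same.
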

\begin{proof}
The strategy here is to use the projection formula. By definition
\begin{align*}
\tau_{i,j}(\gamma)& =  \pi_\ast \Delta_\ast ( \Delta^\ast(\mathrm{ch}_{2+i}(\mathbb{F}_1)\cdot \mathrm{ch}_{2+j}(\mathbb{F}_2))\cdot \pi_X^\ast\gamma)\\
& =    \pi_\ast (\mathrm{ch}_{2+i}(\mathbb{F}_1)\cdot \mathrm{ch}_{2+j}(\mathbb{F}_2)\cdot \Delta_\ast\pi_X^\ast\gamma)\\
& =   \pi_\ast (\mathrm{ch}_{2+i}(\mathbb{F}_1)\cdot \mathrm{ch}_{2+j}(\mathbb{F}_2)\cdot \pi_{X\times X}^\ast(\delta_\ast\gamma)). \qedhere
\end{align*}
\end{proof}

\begin{prop} \label{prop1}
With the notation as above, we have 
\[\tau_{k_1,k_2}(\gamma) =  \sum_{i}{\tau_{k_1}(u_i)\cdot \tau_{k_2}(v_i)}.
\]
\end{prop}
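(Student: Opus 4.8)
The plan is to feed the K\"unneth decomposition $\delta_\ast\gamma = \sum_i q_1^\ast u_i\cdot q_2^\ast v_i$ into the formula of Lemma \ref{lem2} and then split the resulting pushforward by means of the projection formula and base change. First I would record the two compatibilities visible in the diagram preceding Lemma \ref{lem2}: from $q_j\circ \pi_{X\times X} = \pi_X\circ p_j$ one gets $\pi_{X\times X}^\ast(q_j^\ast\alpha) = p_j^\ast(\pi_X^\ast\alpha)$ for $\alpha\in H^\ast(X)$, and from $\mathbb{F}_j = p_j^\ast\mathbb{F}$ one gets $\mathrm{ch}_{2+k}(\mathbb{F}_j)= p_j^\ast\mathrm{ch}_{2+k}(\mathbb{F})$. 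Substituting the K\"unneth decomposition into Lemma \ref{lem2} and regrouping the factors according to whether they are pulled back along $p_1$ or $p_2$, the expression becomes
\[
\tau_{k_1,k_2}(\gamma)=\sum_i \pi_\ast\big(p_1^\ast A_i\cdot p_2^\ast B_i\big),\qquad A_i:=\pi_X^\ast u_i\cdot\mathrm{ch}_{2+k_1}(\mathbb{F}),\quad B_i:=\pi_X^\ast v_i\cdot\mathrm{ch}_{2+k_2}(\mathbb{F}).
\]

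Next I would exploit that $P(X)\times X\times X$ is the fibre product $(P(X)\times X)\times_{P(X)}(P(X)\times X)$, so that $p_1$ and $p_2$ sit in a Cartesian square over the projection $\pi_P\colon P(X)\times X\to P(X)$, with $\pi=\pi_P\circ p_1=\pi_P\circ p_2$. Since $\pi_P$ is the projection off the smooth projective factor $X$, its Gysin pushforward is integration along $X$ and satisfies the base-change identity $p_{1\ast}p_2^\ast=\pi_P^\ast\pi_{P\ast}$ --- this is precisely the K\"unneth description of $\pi_{P\ast}$ noted in the text. Applying the projection formula twice then gives
\[
\pi_\ast\big(p_1^\ast A_i\cdot p_2^\ast B_i\big)=\pi_{P\ast}\big(A_i\cdot p_{1\ast}p_2^\ast B_i\big)=\pi_{P\ast}\big(A_i\cdot\pi_P^\ast\pi_{P\ast}B_i\big)=\pi_{P\ast}(A_i)\cdot\pi_{P\ast}(B_i)=\tau_{k_1}(u_i)\cdot\tau_{k_2}(v_i),
\]
where the last equality is just the definition of the descendents; summing over $i$ yields Proposition \ref{prop1}.

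The only delicate point is the base-change identity $p_{1\ast}p_2^\ast=\pi_P^\ast\pi_{P\ast}$ for these cohomological Gysin maps, but because $\pi_P$ is literally a product projection it reduces to the K\"unneth formula and is immediate; everything else is routine bookkeeping with the projection formula and the compatibility of Chern characters with pullback. I would also note that the argument uses nothing about $\mathbb{F}$ beyond properness and the l.c.i.\ structure of $\pi_P$, so the same scheme of proof extends to generalized descendents assembled from more than two factors of $\mathbb{F}$.
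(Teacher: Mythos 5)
Your proposal is correct and follows essentially the same route as the paper: substitute the K\"unneth decomposition of $\delta_\ast\gamma$ into Lemma \ref{lem2}, regroup the factors as $p_1^\ast(\cdot)\cdot p_2^\ast(\cdot)$, and conclude via the projection formula together with the base-change identity $p_{1\ast}p_2^\ast=\pi_P^\ast\pi_{P\ast}$ for the Cartesian square of product projections over $P(X)$. Your write-up is in fact slightly cleaner than the paper's, which uses the same chain of equalities but with some notational ambiguity between the projections to $P(X)$ and to $X$.
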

\begin{proof}
We have the following commutative diagram
\[
\begin{CD}
P(X)\times X @>{\triangle}>> P(X)\times X \times X @>{p_2}>> P(X)\times X \\
@.                                                  @V{p_1}VV                                     @V{\pi_2}VV \\
                        @.                         P(X) \times X           @>{\pi_1}>>   X.
\end{CD}
\]
Again, we use the projective formula. By the diagram above and Lemma \ref{lem2}, we have
\begin{align*}
\tau_{k_1, k_2}(\gamma) &=  {\pi_1}_\ast{p_1}_\ast (\pi_{X \times X}^\ast (\delta_\ast \gamma)\cdot p_1^\ast \mathrm{ch}_{2+k_1}(\mathbb{F})\cdot p_2^\ast \mathrm{ch}_{2+k_2}(\mathbb{F})) \\
&=    {\pi_1}_\ast{p_1}_\ast ((\sum_{i} p_1^\ast \pi_1^\ast u_i \cdot p_2^\ast \pi_2 ^\ast v_i)\cdot p_1^\ast \mathrm{ch}_{2+k_1}(\mathbb{F})\cdot p_2^\ast \mathrm{ch}_{2+k_2}(\mathbb{F})) \\
&= \sum_i {\pi_1}_\ast{p_1}_\ast (p_1^\ast (\pi_1^\ast u_i \cdot \mathrm{ch}_{2+k_1}(\mathbb{F}))\cdot p_2^\ast (\pi_2^\ast v_i \cdot \mathrm{ch}_{2+k_2}(\mathbb{F})))\\
&= \sum_i {\pi_1}_\ast(\pi_1^\ast u_i \cdot \mathrm{ch}_{2+k_1}(\mathbb{F}) \cdot {p_1}_\ast p_2^\ast(\pi_2^\ast v_i \cdot \mathrm{ch}_{2+k_2}(\mathbb{F})))\\
&= \sum_i {\pi_1}_\ast(\pi_1^\ast u_i \cdot \mathrm{ch}_{2+k_1}(\mathbb{F}) \cdot \pi_1^\ast {\pi_2}_\ast(\pi_2^\ast v_i \cdot \mathrm{ch}_{2+k_2}(\mathbb{F})))\\
&= \sum_i {\pi_1}_\ast(\pi_1^\ast u_i \cdot \mathrm{ch}_{2+k_1}(\mathbb{F}) )\cdot {\pi_2}_\ast(\pi_2^\ast v_i \cdot \mathrm{ch}_{2+k_2}(\mathbb{F}))\\
 &= \sum_{i}{\tau_{k_1}(u_i)\cdot \tau_{k_2}(v_i)}. \qedhere
 \end{align*}
\end{proof}
This shows that generalized descendents can be written in terms of descendents.

\subsection{Virtual tangent bundle}
We study the virtual tangent bundle $T^{\mathrm{vir}}$ of $P_n(X, \beta)$. See \cite{PT} for more details. Let
\[
\mathbb{I}^\bullet =[\mathcal{O}_{P(X)\times X} \rightarrow \mathbb{F}] 
\] be the universal stable pair on $P(X) \times X$. There is a perfect obstruction theory on $P(X)$ such that the tangent-obstruction spaces at a pair $I^\bullet \in P(X)$ are then given by the traceless extension groups $\mathrm{Ext}^1 (I^\bullet , I^\bullet)_0$ and $\mathrm{Ext}^2 (I^\bullet , I^\bullet)_0$, $i.e.$,
\[
T^{\mathrm{vir}}_{[I^\bullet]} = \mathrm{Ext}^1 (I^\bullet , I^\bullet)_0 - \mathrm{Ext}^2 (I^\bullet , I^\bullet)_0.
\]
The element $R{\pi_P}_\ast R\mathcal{H}om(\mathbb{I}^\bullet, \mathbb{I}^\bullet)_0$ in the bounded derived category of coherent sheaves $D^b(P(X))$ represents a class in $K^{0}(P(X))$ which is denoted by $[R{\pi_P}_\ast R\mathcal{H}om(\mathbb{I}^\bullet, \mathbb{I}^\bullet)_0]$. Then by the description above, we have
\begin{equation}\label{formula2}
-T^{\mathrm{vir}} = [E_1] - [E_0] =  [R{\pi_P}_\ast R\mathcal{H}om(\mathbb{I}^\bullet, \mathbb{I}^\bullet)_0] \in K^{0}(P(X)).
\end{equation}
And clearly by definition
\begin{equation}\label{formula3}
[R{\pi_P}_\ast R\mathcal{H}om(\mathbb{I}^\bullet, \mathbb{I}^\bullet)_0] =  [R{\pi_P}_\ast R\mathcal{H}om(\mathbb{I}^\bullet, \mathbb{I}^\bullet)] - [R{\pi_P}_\ast\mathcal{O}_{P(X)\times X}].
\end{equation}

\subsection{Chern characters}
Applying the Grothendieck--Riemann--Roch theorem, we show that the Chern class $c_k(T^{\mathrm{vir}})$ can be expressed in terms of generalized descendents. Then combining with Proposition \ref{prop1}, we finish the proof of Theorem \ref{thmdescendents}.

Equivalently, we prove the following proposition. 

\begin{prop}
The Chern character $\mathrm{ch}_k (- T^{\mathrm{vir}})$  can be expressed in terms of generalized descendents.
\end{prop}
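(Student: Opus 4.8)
\emph{Plan of proof.}
The plan is to compute $\mathrm{ch}(-T^{\mathrm{vir}})$ by applying the Grothendieck--Riemann--Roch theorem to the morphism ${\pi_P}: P(X)\times X \to P(X)$, which is smooth and projective with relative tangent bundle $\pi_X^\ast T_X$, and then to unwind the resulting pushforward into the operators $\tau_i(\gamma)$ and $\tau_{i,j}(\gamma)$ using their definitions from Section 0.3 and Section 3.1. The starting point is the identity $-T^{\mathrm{vir}} = [R{\pi_P}_\ast R\mathcal{H}om(\mathbb{I}^\bullet,\mathbb{I}^\bullet)] - [R{\pi_P}_\ast\mathcal{O}_{P(X)\times X}]$ obtained from (\ref{formula2}) and (\ref{formula3}).

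First I would record the algebra on $P(X)\times X$. Since $\mathbb{I}^\bullet$ is a perfect complex, so is $R\mathcal{H}om(\mathbb{I}^\bullet,\mathbb{I}^\bullet)$, and the Chern character is multiplicative, so $\mathrm{ch}(R\mathcal{H}om(\mathbb{I}^\bullet,\mathbb{I}^\bullet)) = \mathrm{ch}(\mathbb{I}^\bullet)^\vee\cdot \mathrm{ch}(\mathbb{I}^\bullet)$, where $(\,\cdot\,)^\vee$ is the involution acting by $(-1)^a$ on the degree-$2a$ component. From $\mathbb{I}^\bullet = [\mathcal{O}\to\mathbb{F}]$ we get $\mathrm{ch}(\mathbb{I}^\bullet) = 1 - \mathrm{ch}(\mathbb{F})$, hence
\[
\mathrm{ch}(R\mathcal{H}om(\mathbb{I}^\bullet,\mathbb{I}^\bullet)) = \bigl(1-\mathrm{ch}(\mathbb{F})^\vee\bigr)\bigl(1-\mathrm{ch}(\mathbb{F})\bigr) = 1 - \mathrm{ch}(\mathbb{F})^\vee - \mathrm{ch}(\mathbb{F}) + \mathrm{ch}(\mathbb{F})^\vee\,\mathrm{ch}(\mathbb{F}).
\]

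Next I would apply GRR: for any perfect complex $E$ on $P(X)\times X$ one has $\mathrm{ch}(R{\pi_P}_\ast E) = {\pi_P}_\ast\bigl(\mathrm{ch}(E)\cdot\mathrm{Td}(\pi_X^\ast T_X)\bigr)$ in $H^\ast(P(X))_{\mathbb{Q}}$. Applying this to $E=R\mathcal{H}om(\mathbb{I}^\bullet,\mathbb{I}^\bullet)$ and to $E=\mathcal{O}_{P(X)\times X}$ and subtracting, the two contributions of the constant term ``$1$'' cancel (each equals ${\pi_P}_\ast\mathrm{Td}(\pi_X^\ast T_X) = \chi(\mathcal{O}_X)$), leaving
\[
\mathrm{ch}(-T^{\mathrm{vir}}) = {\pi_P}_\ast\Bigl[\bigl(\mathrm{ch}(\mathbb{F})^\vee\,\mathrm{ch}(\mathbb{F}) - \mathrm{ch}(\mathbb{F})^\vee - \mathrm{ch}(\mathbb{F})\bigr)\cdot\mathrm{Td}(\pi_X^\ast T_X)\Bigr].
\]
To isolate $\mathrm{ch}_k(-T^{\mathrm{vir}})\in H^{2k}(P(X))$ I would take the degree-$2(k+3)$ component of the bracket (the fibers of ${\pi_P}$ have dimension $3$) and expand $\mathrm{Td}(\pi_X^\ast T_X) = \sum_{l=0}^{3}\pi_X^\ast\mathrm{Td}_l(T_X)$, $\mathrm{ch}(\mathbb{F}) = \sum_a\mathrm{ch}_a(\mathbb{F})$, $\mathrm{ch}(\mathbb{F})^\vee = \sum_a(-1)^a\mathrm{ch}_a(\mathbb{F})$. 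Each resulting monomial pushes forward to $\pm\tau_{a-2}\bigl(\mathrm{Td}_l(T_X)\bigr)$ (from the two linear terms) or to $(-1)^a\,\tau_{a-2,\,b-2}\bigl(\mathrm{Td}_l(T_X)\bigr)$ (from the quadratic term), directly by the definitions of $\tau_i$ and $\tau_{i,j}$; contributions with $a=0$ drop out since $\mathrm{ch}_0(\mathbb{F}) = \mathrm{rk}(\mathbb{F}) = 0$. This exhibits $\mathrm{ch}_k(-T^{\mathrm{vir}})$ as an explicit finite $\mathbb{Z}$-linear combination of descendents and generalized descendents, which is the proposition; combined with Proposition \ref{prop1} and the universal (Newton) identities between $\{c_k\}$ and $\{\mathrm{ch}_k\}$ it then gives Theorem \ref{thmdescendents}.

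The computation itself is bookkeeping, so I do not expect a serious obstacle; the two points to handle carefully are (i) that GRR is being used over the possibly singular base $P(X)$, which is legitimate because ${\pi_P}$ is a \emph{smooth} projective morphism and one invokes the relative GRR formula with the honest relative tangent bundle $\pi_X^\ast T_X$ (no tangent bundle of $P(X)$ is needed); and (ii) that the classes playing the role of $\gamma$ in $\tau_i(\gamma)$ and $\tau_{i,j}(\gamma)$ are exactly the components $\mathrm{Td}_l(T_X)$, which are pulled back from $X$ — precisely what is required for the output to lie in the span of (generalized) descendents. Tracking the dualization signs $(-1)^a$ correctly is the only other place where a slip is easy.
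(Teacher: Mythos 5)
Your proposal is correct and follows essentially the same route as the paper: starting from $-T^{\mathrm{vir}} = [R{\pi_P}_\ast R\mathcal{H}om(\mathbb{I}^\bullet,\mathbb{I}^\bullet)] - [R{\pi_P}_\ast\mathcal{O}_{P(X)\times X}]$, applying Grothendieck--Riemann--Roch along the smooth projective morphism $\pi_P$ with relative tangent bundle $\pi_X^\ast T_X$, using multiplicativity of the Chern character to reduce to products of $\mathrm{ch}_i(\mathbb{F})$ against classes $\mathrm{Td}_l(T_X)$ pulled back from $X$, and recognizing the pushforwards as descendents and generalized descendents. Your additional bookkeeping (the expansion $\mathrm{ch}(\mathbb{I}^\bullet)=1-\mathrm{ch}(\mathbb{F})$ and the cancellation of the constant terms against the $\mathcal{O}_{P(X)\times X}$ contribution) just makes explicit what the paper disposes of by noting that $\mathrm{ch}_k(R{\pi_P}_\ast\mathcal{O}_{P(X)\times X})=0$ for $k>0$ by dimension reasons.
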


\begin{proof}
By (\ref{formula2}) and (\ref{formula3}), we have
\begin{align*}
\mathrm{ch}_k(-T^{\mathrm{vir}}) & =  \mathrm{ch}_k([R{\pi_P}_\ast R\mathcal{H}om(\mathbb{I}^\bullet, \mathbb{I}^\bullet)]) - \mathrm{ch}_k([R{\pi_P}_\ast \mathcal{O}_{P(X)\times X}])\\
                                 & =  \mathrm{ch}_k([R{\pi_P}_\ast(\mathbb{I}^\bullet\otimes^{L} (\mathbb{I}^\bullet)^{\vee})]) - \mathrm{ch}_k([R{\pi_P}_\ast \mathcal{O}_{P(X)\times X}]).
\end{align*}    
Here $\otimes^{L}$ means the tensor product in the derived category.  

By applying the Grothendieck--Riemann--Roch formula (see \cite{Fulton2}) to the morphism
\[   \pi_P:P(X)\times X \rightarrow P(X),
\] 
we get 
\begin{align*}
 \mathrm{ch}([R{\pi_P}_\ast(\mathbb{I}^\bullet\otimes^{L} (\mathbb{I}^\bullet)^{\vee})]) & = 
 {\pi_P}_\ast(\mathrm{ch}(\mathbb{I}^\bullet\otimes^{L} (\mathbb{I}^\bullet)^{\vee}) \cdot \mathrm{Td}(\pi_X^\ast T_X))\\
 & =  {\pi_P}_\ast(\mathrm{ch}(\mathbb{I}^\bullet) \cdot \mathrm{ch}((\mathbb{I}^\bullet)^{\vee}) \cdot \pi_X^\ast \mathrm{Td}(T_X)).
\end{align*}
Take the degree $k$ part $(k>0)$, we see that $\mathrm{ch}_k([R{\pi_P}_\ast(\mathbb{I}^\bullet\otimes^{L}(\mathbb{I}^\bullet)^{\vee})]) $ can be expressed as
\begin{equation} \label{Express}
\sum_{2i+2j+\mathrm{deg}(\gamma_{ij})=2k+6}a_{ij}\cdot {\pi_P}_\ast(\mathrm{ch}_i(\mathbb{F})\cdot \mathrm{ch}_j(\mathbb{F})\cdot\pi_X^\ast\gamma_{ij})
\end{equation}
where $a_{ij} \in \mathbb{Q}$ and $\gamma_{ij}\in H^\ast(X)$. 
Similarly, we have
\[\mathrm{ch}(R{\pi_P}_\ast \mathcal{O}_{P(X)\times X}) = {\pi_P}_\ast(\pi_X^\ast \mathrm{Td}(T_X)).\] 
Note that $\mathrm{ch}_k(R{\pi_P}_\ast \mathcal{O}_{P(X)\times X}) = 0$ if $k >0 $ for dimension reasons. Hence $\mathrm{ch}_k(-T^{\mathrm{vir}})$ can be represented as in (\ref{Express}) when $k>0$. 
\end{proof}

\section{Cobordism invariants: rationality and functional equation}
\subsection{Rationality of the partition functions of descendents} The rationality of the partition functions of descendents was conjectured by Pandharipande--Thomas \cite{vertex} and proved for nonsingular toric 3-folds in \cite{PPrationality}.
\begin{conj}[\cite{vertex}] \label{conj1}
The partition function $Z_\beta^{X}\big{(}  \prod_{j=1}^{m} \tau_{i_j}(\gamma_j) \big{)}$ is the Laurent expansion of a rational function in $q$.
\end{conj}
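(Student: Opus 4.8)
The plan is to prove rationality in the case where $X$ is a nonsingular projective toric $3$-fold, following the $T$-equivariant strategy of \cite{PPrationality}; the conjecture for general $X$ is deeper and I would not attempt it directly. Let $T=(\mathbb{C}^\ast)^3$ act on $X$ with isolated fixed points and finitely many invariant curves. The action lifts to the moduli space $P_n(X,\beta)$, and since each descendent $\tau_{i_j}(\gamma_j)$ is assembled from $\pi_X^\ast\gamma_j$ and $\mathrm{ch}_{2+i_j}(\mathbb{F})$, the insertions carry canonical $T$-equivariant lifts and the virtual class localizes. The first step is therefore to apply the virtual localization formula of Graber--Pandharipande \cite{GP} to rewrite each coefficient $\langle \prod_{j}\tau_{i_j}(\gamma_j)\rangle_{n,\beta}^{X}$ as a sum over the $T$-fixed loci of $P_n(X,\beta)$, weighted by inverse equivariant Euler classes of virtual normal bundles.

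The second step is the combinatorial reduction of the fixed-point data. The $T$-fixed stable pairs are supported on the toric $1$-skeleton of $X$ and are encoded by equivariant partitions along the edges together with finite box configurations at the vertices. This expresses $Z_\beta^X\big(\prod_{j}\tau_{i_j}(\gamma_j)\big)$, after summing over $n$, as a finite sum of products of \emph{edge} factors and \emph{capped vertex} factors, with the descendent insertions distributed over the vertices. Because the global series is built from these local pieces by finite products and by specialization of the equivariant parameters, and because rationality is stable under such operations once a common denominator is cleared, it suffices to prove rationality in $q$ of each capped descendent vertex.

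The main obstacle is exactly this last point: the rationality of the capped descendent vertex $\mathsf{V}$ as a series in $q$. The raw (uncapped) vertex is not rational, so the essential input is that capping---gluing relative $\mathbb{P}^1$ geometries so as to absorb the poles of the bare vertex---produces a series with only finitely many poles. I would establish this through the descendent theory of local curves, where rationality of the relevant relative series is available from the stationary and rationality analyses \cite{stationary}, \cite{rationality} and is propagated to the capped vertex by the rubber and relative gluing calculus used in \cite{PPrationality}; the finiteness of the box configurations in each fixed Euler characteristic, together with the bounded cohomological degree of the insertions, controls the pole orders uniformly. Granting rationality of each capped vertex, the conclusion is routine: substituting the rational vertex and edge contributions into the localization sum and clearing the finitely many common denominators exhibits $Z_\beta^X\big(\prod_{j}\tau_{i_j}(\gamma_j)\big)$ as the Laurent expansion of a rational function in $q$.
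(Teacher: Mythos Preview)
The statement you are addressing is labeled \emph{Conjecture} in the paper, and the paper does not attempt to prove it. It is quoted from \cite{vertex} and remains open in general; the only thing the paper records is Theorem~\ref{PP}, which \emph{cites} \cite{PPrationality} for the toric case without reproducing any argument. So there is no ``paper's own proof'' to compare against: the paper treats this result as a black box and uses it (together with Theorem~\ref{thmdescendents}) to deduce Theorem~\ref{toric}.

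That said, your sketch is a fair high-level summary of what \cite{PPrationality} actually does in the toric case: virtual localization to the toric $1$-skeleton, organization into edge and capped-vertex contributions, and the key input that the capped descendent vertex is rational in $q$. Two cautions if you intend this as more than a pointer to the literature. First, the crucial step---rationality of the capped descendent vertex---is precisely the hard theorem of \cite{PPrationality}; invoking ``rubber and relative gluing calculus'' and ``bounded cohomological degree'' does not by itself establish it, and your paragraph does not indicate the actual mechanism (pole cancellation via the cap, analyzed through the descendent/relative correspondence). Second, for general (non-toric) $X$ the conjecture is genuinely open in the paper's context, so your opening disclaimer is appropriate and should be retained.
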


\begin{thm} [\cite{PPrationality}] \label{PP}
Conjecture \ref{conj1} is true for all nonsingular projective toric 3-folds.
\end{thm}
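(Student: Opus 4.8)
The plan is to prove Theorem \ref{PP} by $T$-equivariant virtual localization on the toric $3$-fold $X$, reducing the global descendent series to a combinatorial assembly of local contributions indexed by the moment graph, and then establishing rationality of the local building blocks. Let $T=(\mathbb{C}^*)^3$ act on $X$; the fixed points $X^T$ are the invariant affine charts $\cong \mathbb{C}^3$ and the invariant curves are the $T$-fixed $\mathbb{P}^1$'s forming the edges of the moment graph. The induced $T$-action on $P_n(X,\beta)$ has fixed locus whose connected components $P^{(\alpha)}$ are indexed by purely combinatorial data: a partition along each edge recording the transverse profile of the invariant subcurve, together with a finite box configuration at each vertex. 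These components may be positive-dimensional, so by the virtual localization formula of \cite{GP} the descendent invariant $\langle \prod_j \tau_{i_j}(\gamma_j)\rangle^X_{n,\beta}$ is a sum over the $P^{(\alpha)}$ of the integrals $\int_{[P^{(\alpha)}]^{\mathrm{vir}}} \frac{\prod_j \tau_{i_j}(\gamma_j)|_{P^{(\alpha)}}}{e(N^{\mathrm{vir}}_\alpha)}$, taken in localized $T$-equivariant cohomology.

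The moment graph organizes this sum: each vertex contributes a descendent vertex series $\mathsf{W}$, a power series in $q$ with coefficients in $\mathbb{Q}(s_1,s_2,s_3)$, and each edge contributes an edge factor; the descendent insertions localize to weights at the vertices through the restrictions $\gamma|_{X^T}$. Thus $Z_\beta^X(\prod_j \tau_{i_j}(\gamma_j))$ becomes a finite sum over graph configurations of products of vertex and edge contributions. The difficulty is that the bare vertices $\mathsf{W}$ are only power series in $q$ and are not individually rational, so rationality cannot be read off directly from this decomposition.

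The resolution, and the main technical obstacle, is to pass to the capped descendent vertex in the spirit of \cite{MOOP}. One glues a $T$-equivariant $\mathbb{P}^1\times\mathbb{C}^2$ cap onto each leg of the vertex and re-sums the localization contributions so that $Z_\beta^X(\prod_j \tau_{i_j}(\gamma_j))$ is expressed as a finite combination of capped descendent vertices and capped edges. The central claim is that the capped descendent vertex is a rational function of $q$. I would prove this by the relative/rubber degeneration method: degenerating the capping $\mathbb{P}^1$'s reduces the capped vertex to descendent series on the simplest relative geometries (caps and tubes over $\mathbb{P}^1$), where rationality can be invoked. Rationality for these local models follows either from the stationary descendent rationality of \cite{stationary} combined with the descendent correspondence reducing general insertions to stationary ones, or alternatively from the Gromov--Witten/stable-pairs descendent correspondence together with rationality already known on the Gromov--Witten side. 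The capped edges are rational by the analogous but simpler local-curve analysis. I expect this capped-vertex rationality to be the crux of the whole argument; the localization setup and edge bookkeeping are routine once the combinatorics of the fixed loci is in place.

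Finally, since $X$ is projective rather than merely quasi-projective, the equivariant parameters must cancel in the end. Assembling the capped vertices and capped edges along the moment graph expresses $Z_\beta^X(\prod_j \tau_{i_j}(\gamma_j))$ as a finite sum of products of series each rational in $q$ with coefficients in $\mathbb{Q}(s_1,s_2,s_3)$; the sum is therefore rational in $q$ over $\mathbb{Q}(s_1,s_2,s_3)$. Because the descendent invariants are genuine non-equivariant numbers, this rational function is independent of $s_1,s_2,s_3$, so its coefficients lie in $\mathbb{Q}$ and it is a rational function of $q$ over $\mathbb{Q}$. This establishes Conjecture \ref{conj1} for all nonsingular projective toric $3$-folds, which is Theorem \ref{PP}.
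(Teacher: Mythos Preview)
The paper does not prove Theorem~\ref{PP}; it is quoted as an external result from \cite{PPrationality} and used as a black box. The only ``proof'' content the paper supplies is the remark that \cite{PPrationality} in fact establishes the stronger $\mathbf{T}$-equivariant rationality (even for non-compact toric $3$-folds), from which the non-equivariant projective case follows by specializing the equivariant parameters. So there is nothing to compare at the level of the paper itself: you have written more than the paper does.

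That said, your sketch is a fair outline of the actual strategy of \cite{PPrationality}: virtual localization to the moment graph, replacement of the bare vertex by the capped descendent vertex, and rationality of the capped vertex via degeneration to $1$-leg local geometries. One caution: the alternative you offer---deducing capped-vertex rationality from the GW/Pairs descendent correspondence plus rationality on the GW side---is not how \cite{PPrationality} proceeds and would be circular or anachronistic, since the descendent correspondence in \cite{toric}, \cite{quintic} is established \emph{after} and partly \emph{using} the rationality results of \cite{PPrationality}. The genuine input for the $1$-leg capped vertex is the direct local-curve analysis (stationary and then full descendents), not an appeal to the GW side. If you keep only the degeneration-to-local-curves route, your plan matches the cited reference.
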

Actually, a stronger statement is proved in \cite{PPrationality}. They showed that the rationality holds even for the $\bf T$-equivariant partition function of descendents, where $\bf T$ is a 3-dimensional algebraic torus acting on the nonsingular toric 3-fold $X$ ($X$ need not be compact). See Theorem 1 of \cite{PPrationality}.

\subsection{Proof of Theorem \ref{toric}} 
We choose the basis $v_I$ as in Section 0.2. Then under this basis, the functions $f_I(q)$ (see (\ref{partition})) are just the partition functions of Chern numbers, $i.e.$,
\[ 
   f_I(q) = \sum_{n} c_n^{I} q^n
\]
where
\[ c_n^{I} :=  \int_{[P_n(X, \beta)]^{\mathrm{vir}}}{c_1(T^{\mathrm{vir}})^{i_1}c_2(T^{\mathrm{vir}})^{i_2}\cdots c_n(T^\mathrm{vir})^{i_n}}, I=(i_1,i_2, \dots). \]

Hence it is clear that Theorem \ref{toric} is a consequence of Theorem \ref{thmdescendents} and Theorem \ref{PP}.

\begin{remark}\label{mystery} We should mention here that although the virtual Chern numbers can be expressed in terms of descendent invariants, it is still a mystery why the functional equations hold. From the point of view of the Gromov--Witten/Pairs correspondence \cite{toric} \cite{quintic}, the partitions function of descendents in Gromov--Witten theory and stable pairs are related by the following equation
\[
(-q)^{-d/2}Z_\beta^{X}\Big{(}  \prod_{j=1}^{m} \tau_{i_j}(\gamma_j) \Big{)}_{P} = 
(-iu)^d Z_\beta^{X}\Big{(} {\prod_{j=1}^{m} \overline{\tau_{i_j}(\gamma_j)}} \Big{)}_{\textup{GW}}
\]
under the variable change $q= - e^{iu}$. Here the correspondence rule
\[
\prod_{j=1}^{m} \tau_{i_j}(\gamma_j) \mapsto {\prod_{j=1}^{m} \overline{\tau_{i_j}(\gamma_j)}}
\]
is given by a matrix with coefficients in $\mathbb{C}$ (see \cite{quintic} Section 0.3). When the virtual dimension $d$ is even, the condition that $Z_\beta^{X}\Big{(}  \prod_{j=1}^{m} \tau_{i_j}(\gamma_j) \Big{)}_{P}$ satisfy the functional equation $f(1/q) = q^{-d}f(q)$ implies $Z_\beta^{X}\Big{(} {\prod_{j=1}^{m} \overline{\tau_{i_j}(\gamma_j)}} \Big{)}_{\textup{GW}}\in \mathbb{Q}((u))$, which is not true in general. When $d$ is odd, even the invariants of primary field insertions do not satisfy our functional equation.\footnote{The partition functions of primary fields satisfy the functional equation $f(1/q) = (-q)^{d}f(q)$ by the GW/Pairs correspondence. The computation in Section 4.4 (b) provides evidence for the functional equation we conjectured when $d$ is odd.} Hence properties of normal stable pairs invariants (primary fields and descendents) may not explain why Conjecture \ref{mainconj} (2) holds. 
\end{remark}

\subsection{Toric calculation} For a nonsingular toric 3-fold $X$ acted upon by a 3-dimensional complex torus $\bf T$, the $\bf T$-character of the virtual tangent bundle $T^{\mathrm{vir}}$ can be written explicitly (Theorem 2 of \cite{vertex}). For convenience, we consider the case that the fixed loci of the torus action are isolated points on $P_n(X, \beta)$. Then by \cite{GP} we have
\begin{align*}
c_n^{I} &=  \int_{[P_n(X, \beta)]^{\mathrm{vir}}}{c_1(T^{\mathrm{vir}})^{i_1}c_2(T^{\mathrm{vir}})^{i_2}\cdots c_n(T^\mathrm{vir})^{i_n}}\\
           &=\sum_{\textup{fixed point }Q}\frac{c_1({\bf T}^{\mathrm{vir}})^{i_1}c_2({\bf T}^{\mathrm{vir}})^{i_2}\cdots c_n({\bf T}^\mathrm{vir})^{i_n}}{e({\bf T}^{\mathrm{vir}})}.
\end{align*}
Here ${\bf T}^{\mathrm{vir}}$ is the $\bf{T}$-character of $T^{\mathrm{vir}}|_Q$ (the details of the localization formula of the virtual class $[P_n(X, \beta)]^{\mathrm{vir}}$ can be found in Section 4 of \cite{vertex}). This gives us a concrete way to compute $c_n^I$ term by term. We will use this method and computer to calculate a few terms of the series $\sum_{n} c_n^{I} q^n$ and see how the rational function looks like. We will also check that these rational functions we obtain satisfy the functional equation we conjectured. This provides the numerical evidence for Conjecture \ref{mainconj}. 

However, it should be mentioned that we get these rational functions experimentally. Complete proofs of the close formulas have not yet been shown for most cases.\footnote{For the special case when $\beta$ is a line in the nonsingular toric 3-fold $X$, we may give the strict proof as follows. First, we can bound the degree of the denominator and numerator of each partition function of descendents by the degree one vertex (Section 6.4 of \cite{PT} ). Since for fixed $X$ and $\beta$, there are only finite cases of descendent invariants by dimension reasons, the degree of the denominator and numerator of each partition function of Chern numbers are also bounded. Hence a few terms determine the whole function.}

\subsection{Irreducible curve class} First we give examples when $X$ is a nonsingular projective 3-fold and $\beta$ is a line.

\bigskip
\noindent (a). $X=\mathbb{P}^1 \times \mathbb{P}^2$, $\beta = [\mathbb{P}^1]\times \textup{pt}$. So we have $d(=\mathrm{vir.dim}) = 2$.

(1). I=(2),
\[\sum_{n}{c_n^I q^n}\approx \frac{9q}{(1+q)^4} \Big{(}  1+4q+ 30q^2 +4q^3 +q^4 \Big{)}.\]

(2). I=(0,1),
\[\sum_{n}{c_n^I q^n} \approx \frac{3q}{(1+q)^4} \Big{(}  1+4q+ 42q^2 +4q^3 +q^4 \Big{)}.\]
Both rational functions satisfy the functional equation
\[
 f(1/q) = q^{-2}f(q).
\]

\bigskip
\noindent (b). $X=\mathbb{P}^1 \times \mathbb{P}^2$, $\beta = \textup{pt}\times [L]$, where $[L]$ is a line in $\mathbb{P}^2$. Then $d = 3$.

(1). I=(3),
\[\sum_{n}{c_n^I q^n} \approx \frac{6q}{(1+q)^5} \Big{(}  9+47q+ 73q^2 +550q^3 +73q^4+47q^5 + 9q^6 \Big{)}.\]

(2). I=(1,1),
\[\sum_{I}{c_n^I q^n} \approx \frac{24q}{(1+q)^5} \Big{(}  1+6q+ 12q^2 +74q^3 +12q^4+6q^5+q^6 \Big{)}.\]

(3). I=(0,0,1),
\[\sum_{I}{c_n^I q^n} \approx \frac{6q}{(1+q)^5} \Big{(}  1+5q+ 7q^2 +86q^3 +7q^4+5q^5+q^6 \Big{)}.\]
All rational functions satisfy the functional equation
\[
 f(1/q) = q^{-3}f(q).
\]

\bigskip
\noindent (c). $X = \mathbb{P}^3$, $\beta = [L] \in \mathbb{P}^3$, where $L$ is a line. Then $d=4$.

(1). I=(4),
\[\sum_{n}{c_n^I q^n} \approx \frac{64q}{(1+q)^6} \Big{(}  8+49q+ 124q^2 +3q^3 +1040q^4+3q^5 + 124q^6+49q^7+8q^8 \Big{)}.\]

(2). I=(2,1),
\begin{multline*}
\sum_{n}{c_n^I q^n} \approx \frac{16q}{(1+q)^6} \Big{(}  14+91q+ 256q^2 +129q^3 \\
+2300q^4+129q^5 + 256q^6+91q^7+14q^8 \Big{)}.
\end{multline*}

(3). I=(0,2),
\begin{multline*}
\sum_{n}{c_n^I q^n} \approx \frac{2q}{(1+q)^6} \Big{(}  49+352q+ 1132q^2 +1184q^3 \\
+10310q^4+1184q^5 + 1132q^6+352q^7+49q^8 \Big{)}.
\end{multline*}

(4). I=(1,0,1),
\[\sum_{I}{c_n^I q^n} \approx \frac{16q}{(1+q)^6} \Big{(}  3+19q+ 49q^2 -7q^3 +720q^4-7q^5 + 49q^6+19q^7+3q^8 \Big{)}.\]

(5). I=(0,0,0,1),
\[\sum_{n}{c_n^I q^n} \approx \frac{2q}{(1+q)^6} \Big{(}  3+32q+ 92q^2 -32q^3 +1410q^4-32q^5 + 92q^6+32q^7+3q^8 \Big{)}.\]
All rational functions satisfy the functional equation
\[
 f(1/q) = q^{-4}f(q).
\]

\subsection{More complicated cases} 
We study examples when $\beta$ is not irreducible. The following two cases are typical.

\bigskip
\noindent (d). $X= \mathbb{P}^1 \times \mathbb{P}^2$, $\beta = 2([\mathbb{P}^1]\times \textup{pt})$. Then $d = 4$.

(1). I=(4),
\begin{multline*}
\sum_{n}{c_n^Iq^n} \approx \frac{9q^2}{(1+q)^8(1-q)^6}\Big(27 +22 q -535  q^2 +10716 q^3 -24861  q^4 -63414 q^5 +258841 q^6 \\
  -423032 q^7 +258841 q^8 -63414 q^9 -24861  q^{10} +10716  q^{11}  -535  q^{12} +22 q^{13} +27 q^{14}  \Big).
\end{multline*}

(2). I=(2,1),
\begin{multline*}
\sum_{n}{c_n^Iq^n} \approx \frac{18q^2}{(1+q)^8(1-q)^6}\Big(9 +14 q -95  q^2 +2520 q^3 -6321  q^4 -15246 q^5 +64775 q^6 \\
 -106672 q^7 +64775 q^8 -15246 q^9 -6321  q^{10} +2520  q^{11}  -95  q^{12} +14 q^{13} +9 q^{14}  \Big).
\end{multline*}

(3). I=(0,2),
\begin{multline*}
\sum_{n}{c_n^Iq^n} \approx \frac{18q^2}{(1+q)^8(1-q)^6}\Big(5 +11 q -25  q^2 +1182 q^3 -3219  q^4 -7323 q^5 +32423 q^6 \\
 -53788 q^7 +32423 q^8 -7323 q^9 -3219  q^{10} +1182  q^{11}  -25  q^{12} +11 q^{13} +5 q^{14}  \Big).
\end{multline*}

(4). I=(1,0,1),
\begin{multline*}
\sum_{n}{c_n^Iq^n} \approx \frac{18q^2}{(1+q)^8(1-q)^6}\Big(3 +4 q -39  q^2 +856 q^3 -2077  q^4 -5124 q^5 +21569 q^6 \\
 -35504 q^7 +21569 q^8 -5124 q^9 -2077  q^{10} +856  q^{11}  -39  q^{12} +4 q^{13} +3 q^{14}  \Big).
\end{multline*}

(5). I=(0,0,0,1),
\begin{multline*}
\sum_{n}{c_n^Iq^n} \approx \frac{9q^2}{(1+q)^8(1-q)^6}\Big(1 +4 q -53  q^2 +488 q^3 -935  q^4 -2820 q^5 +10715 q^6 \\
-17360 q^7 +10715 q^8 -2820 q^9 -935  q^{10} +488  q^{11}  -53  q^{12} +4 q^{13} + q^{14}  \Big).
\end{multline*}
All rational functions satisfy the functional equation
\[
 f(1/q) = q^{-4}f(q).
\]

\bigskip
\noindent (e). $X= \mathbb{P}^1\times \mathbb{P}^1 \times \mathbb{P}^1$, $\beta =[\mathbb{P}^1]\times \textup{pt} \times \textup{pt} + \textup{pt} \times [\mathbb{P}^1] \times \textup{pt}$. Then $d=4$.

(1). I=(4),
\begin{multline*}
\sum_{I}{c_n^I q^n} \approx \frac{128q}{(1+q)^8} \Big{(}  4+32q+113q^2 -62q^3 +515q^4 \\
+3052q^5 +515q^6-62q^7+113q^8+32q^9+4q^{10} \Big{)}.
\end{multline*}

(2). I=(2,1),
\begin{multline*}
\sum_{n}{c_n^I q^n} \approx \frac{32q}{(1+q)^8} \Big{(}  7+59q+224q^2 +10q^3 +1529q^4 \\
+6838q^5 +1529q^6+10q^7+224q^8+59q^9+7q^{10} \Big{)}.
\end{multline*}

(3). I=(0,2),
\begin{multline*}
\sum_{n}{c_n^I q^n} \approx \frac{48q}{(1+q)^8} \Big{(}  2+18q+75q^2 +54q^3 +691q^4 \\
+2544q^5 +691q^6+54q^7+75q^8+18q^9+2q^{10} \Big{)}.
\end{multline*}

(4). I=(1,0,1),
\begin{multline*}
\sum_{n}{c_n^I q^n} \approx \frac{8q}{(1+q)^8} \Big{(}  7+62q+271q^2 +68q^3 +2298q^4 \\
+9500q^5 +2298q^6+68q^7+271q^8+62q^9+7q^{10} \Big{)}.
\end{multline*} 

(5). I=(0,0,0,1),
\begin{multline*}
\sum_{n}{c_n^I q^n} \approx \frac{8q}{(1+q)^8} \Big{(}  1+8q+67q^2 +32q^3 +540q^4 \\
+2288q^5 +540q^6+32q^7+67q^8+8q^9+q^{10} \Big{)}.
\end{multline*} 
All rational functions satisfy the functional equation
\[
 f(1/q) = q^{-4}f(q).
\]

\subsection{Denominators}
According to the numerical data above, we also conjecture the following statement about the poles of the rational functions.

\begin{conj} \label{pole}
For a nonsingular projective 3-fold $X$ and nonzero $\beta \in H_2(X ,\mathbb{Z})$, all the partition functions of Chern numbers \footnote{The number of partition functions is equal to the number of partitions of the virtual dimension  $\int_{\beta}{c_1(X)}$.}
\[
\sum_{n}{c_n^Iq^n}
\]
are the Laurent expansion of rational functions which have the same poles.
\end{conj}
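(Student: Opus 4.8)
The plan is to deduce Conjecture \ref{pole} from the rationality results for descendent series. By Theorem \ref{thmdescendents}---that is, by the Grothendieck--Riemann--Roch computation of Section 3 together with Proposition \ref{prop1}---each Chern-number series has a \emph{fixed} presentation
\[
\sum_n c_n^I q^n \;=\; \sum_{k} a^I_{k}\, Z_\beta^X\!\big(T_k\big),\qquad a^I_{k}\in\mathbb{Q},
\]
where $\{T_k\}$ is a finite collection of descendent monomials $\prod_j \tau_{i_j}(\gamma_j)$ that is \emph{independent of $I$} (the $i_j$ and the degrees of the $\gamma_j$ are bounded by $\dim X=3$ and $d=\int_\beta c_1(X)$). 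By Theorem \ref{toric} all the $Z_\beta^X(T_k)$, and hence all the $\sum_n c_n^I q^n$, are rational functions of $q$ in the toric case; since there are only finitely many of them, they trivially share a common candidate denominator $D_\beta(q)$ whose zeros exhaust the observed poles. So the real content of Conjecture \ref{pole} is the \emph{lower bound}: that for every partition $I$ the series $\sum_n c_n^I q^n$ genuinely has a pole at each point of this common pole set.

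\textbf{Locating and matching the poles.} The candidate poles are roots of unity of order bounded in terms of $\beta$: this follows from the capped descendent vertex of \cite{PPrationality}, which writes each (equivariant) $Z_\beta^X(T_k)$ as a rational function whose denominator is a product of factors $1-q^a$ with exponents $a$ controlled by the edge data of the toric graph of $X$. To prove that a fixed root of unity $\zeta$ is a pole of \emph{every} $\sum_n c_n^I q^n$, the idea is to compute the principal part of each $Z_\beta^X(T_k)$ at $\zeta$ from the vertex formula and to show that, once assembled into the combination defining $c_n^I$, the leading Laurent coefficient at $\zeta$ is a nonzero universal constant (depending only on $X$, $\beta$ and $\zeta$) times an evaluation of a Conner--Floyd/Todd class attached to $I$ which never vanishes. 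The functional equation of Conjecture \ref{mainconj}(2)---which one would want to establish in parallel---halves the labour by pairing $\zeta$ with $\zeta^{-1}$, so it is enough to control the top-order pole at each primitive root of unity.

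\textbf{Main obstacle.} The genuine difficulty is exactly this last point: extracting in closed form the principal parts of PT descendent series at roots of unity, and showing that the $I$-dependent leading coefficient never cancels. This is where the combinatorics of the descendent vertex is delicate and a soft argument seems out of reach; a partial substitute is to detect the pole after applying one of the genus specializations of \cite{FG} (e.g.\ the virtual $\chi_{-y}$-genus), which collapses each series to a single scalar rational function whose pole order can be read off---but one would still have to know that the pole persists for \emph{every} $I$. Finally, for a general nonsingular projective 3-fold the whole argument rests on the still-open rationality Conjecture \ref{conj1}, so unconditionally the statement is within reach only in the toric setting covered by Theorem \ref{toric}.
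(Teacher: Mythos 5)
The statement you are trying to prove is stated in the paper as Conjecture \ref{pole}, not as a theorem: the paper offers no proof at all, only the numerical evidence of the localization computations in Sections 4.3--4.5 (where every computed series for a fixed $X$ and $\beta$ visibly has the same denominator $(1+q)^k$ or $(1+q)^k(1-q)^l$). So there is no argument in the paper to compare yours against, and your proposal should be judged on its own terms as an attempted proof.

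On those terms it is not a proof; it is a plan with the decisive step missing, and you say so yourself. The reduction in your first paragraph is sound and matches what the paper actually establishes: by Theorem \ref{thmdescendents} each $\sum_n c_n^I q^n$ is a fixed finite $\mathbb{Q}$-linear combination of descendent series, and by Theorem \ref{toric} (via \cite{PPrationality}) these are rational in the toric case, so a common denominator exists trivially. You correctly identify that the entire content of the conjecture is the non-vanishing of the principal part of each combination at each candidate pole, i.e.\ that no cancellation occurs for any partition $I$. But your second paragraph gives no mechanism for proving this: the assertion that the leading Laurent coefficient at a root of unity $\zeta$ factors as a nonzero universal constant times a never-vanishing quantity attached to $I$ is pure speculation, unsupported by any computation with the capped descendent vertex, and it is exactly the kind of cancellation statement that the vertex formalism is not known to control. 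Note also that the paper explicitly warns against this route: Remark \ref{mystery} and the closing lines of Section 4.6 state that the analogous structural properties (the functional equation, and the common-pole property) hold for virtual Chern numbers but \emph{not} for individual descendent series, and that the author does ``not know how to attack these problems from the property of descendents by Theorem \ref{thmdescendents}. New approaches might be needed.'' Your strategy of extracting principal parts descendent-by-descendent and then recombining is therefore working against the grain of the phenomenon: the special structure appears only after the $I$-dependent recombination, and nothing in your outline explains why. The appeal to the functional equation of Conjecture \ref{mainconj}(2) to ``halve the labour'' also assumes an unproven conjecture. In short, the gap is the whole theorem: you have reproduced the (correct, but already-known) reduction to descendents and an honest description of the obstacle, not a proof.
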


Note that similarly as Conjecture \ref{mainconj} (2) (see Remark \ref{mystery}), Conjecture \ref{pole} is only expected to hold for virtual Chern numbers (not for descendents). We still do not know how to attack these problems from the property of descendents by Theorem \ref{thmdescendents}. New approaches might be needed.

\nocite{*} 
\bibliographystyle{plain}
\bibliography{mybib}
\end{document}